\newif\ifforceblacklinks\forceblacklinksfalse
\def\PublicationTitle{Unique inclusions of maximal
                      \texorpdfstring{\cclones{}}{C-clones}
                      in maximal clones}%
\def\CorrespondingAuthor{Mike Behrisch}%
\def\SecondAuthor{Edith Vargas-García}%
\def\TUWname{\foreignlanguage{german}{%
             Tech\-ni\-sche Uni\-ver\-si\-t\"{a}t Wien}}%
\def\InstitutCL{\foreignlanguage{german}{%
             In\-sti\-tut f\"{u}r Com\-pu\-ter\-spra\-chen}}%
\def\PostleitzahlWien{\mbox{A-1040} Vienna, Austria}%
\def\ULeeds{University of Leeds}%
\def\SchoolName{School of Mathematics}%
\def\EdithAddress{Woodhouse Lane,
                  Leeds
                  LS2 9JT,
                  UK}%
\def\grantno{207510}%
\def\PublicationTopic{relationship of maximal
                      \texorpdfstring{\cclones{}}{C-clones}
                      and maximal clones on finite domains}%
\def\PublicationKeywords{Clone,
                         \texorpdfstring{\cclone{}}{C-clone},
                         clausal relation,
                         maximal \texorpdfstring{\cclone{}}{C-clone},
                         maximal clone}%
\newcommand{\bfa}[1]{\ensuremath{{\mathbf{#1}}}}
\newcommand{\cclone}{\nbdd{\mathit{C}}clone}
\newcommand{\cclones}{\cclone{s}}
\newcommand{\ignoreAndReturnC}[1]{\ensuremath{\mathcal{C}}}
\DefineOpCloneClosureLikeCmd{\genCClone}{\ignoreAndReturnC} %to replace \Cclo
\DefineRelCloneClosureLikeCmd{\genRelCClone}{\ignoreAndReturnC} %to replace \Cclo
\DeclareMathOperator{\CInvOp}{\mathit{C}\InvOp}
\DefinePolLikeCmd{\CInv}{\CInvOp}
\DeclareMathOperator{\CRelOp}{R}
\newcommand{\Relab}[2]{\ensuremath{\CRelOp_{#2}^{#1}}}%
\newcommand{\Rab}{\Relab{\mathbf{a}}{\mathbf{b}}}%
\newcommand{\Ruab}{\Relab{(a)}{(b)}}%
\DeclareMathOperator{\CCloneSetOp}{\mathit{C}\CRelOp}
\newcommand{\CRd}[1]{\ensuremath{\CCloneSetOp_{#1}}}
\newcommand{\CR}{\CRd{D}}
\newcommand{\Dommenosa}[1]{\ensuremath{D\setminus\set{#1}}}
\newcommand{\interval}[2]{\ensuremath{\fapply{#1,#2}}}%
\newcommand{\name}[1]{\textsc{#1}}
\numberwithin{equation}{section} % local equation numbering
\begin{document}
%%%%%%%%%%%%%%%%%%%%%%%%%%%%%%%%%%%%%%%%%%%%%%%%%%%%%%%%%%%%%%%%%%%%%%%%%%
%%%%%%%%%% Titlepage %%%%%%%%%%%%%%%%%%%%%%%%%%%%%%%%%%%%%%%%%%%%%%%%%%%%%
%%%%%%%%%%%%%%%%%%%%%%%%%%%%%%%%%%%%%%%%%%%%%%%%%%%%%%%%%%%%%%%%%%%%%%%%%%

\thispagestyle{empty}
\selectlanguage{british}

\title{\PublicationTitle}
\author{\CorrespondingAuthor%
        \thanks{\TUWname, \InstitutCL}%
        \and%
        \SecondAuthor%
        \thanks{\ULeeds, \SchoolName}
        \thanks{The research of the second named
                author was supported by CONACYT grant no.~\grantno.}}
\date{\today}
\maketitle

\begin{abstract}
\cclones{} are polymorphism sets of so\dash{}called clausal relations, a
special type of relations on a finite domain, which first appeared in
connection with constraint satisfaction problems in~\cite{%
CreignouHermannKrokhinSalzerComplexityOfClausalConstraintsOverChains}.
We completely describe the relationship \wrt\ set inclusion between
maximal \cclones{} and maximal clones. As a main result we obtain that for
every maximal \cclone{} there exists exactly one maximal clone in which it
is contained. A precise description of this unique maximal clone, as well
as a corresponding completeness criterion for \cclones{} is given.
\bgroup
\let\thefootnote\relax%
\footnote{%
\noindent\emph{AMS Subject Classification} (2010):
  % primary:
  08A40  % General algebraic systems, Algebraic structures,
         % Operations, polynomials, primal algebras
  % secondary:
 (08A02, % General algebraic systems, Algebraic structures,
         % Relational systems, laws of composition
  08A99% % General algebraic systems, Algebraic structures,
         % None of the above, but in this section
  ).\par%
\noindent\emph{Key words and phrases:} \PublicationKeywords}%
\setcounter{footnote}{0}%
\egroup
\end{abstract}

\section{Introduction}
\emph{Clones} are sets of operations on a fixed domain that are closed
under composition and contain all projections. The clones on a finite set
\m{D} are precisely the \name{Galois} closed sets of
operations~\cite{BodnarcukKaluzninKotovRomovGaloisTheoryForPostAlgebras}
with respect to the well\dash{}known \emph{\name{Galois}
connection} \m{\Pol{}{-}\Inv{}}
induced by the relation ``an operation \m{f} \emph{preserves} a relation
\m{\varrho}'' (see
also~\cite{PoeConcreteRepresentationOfAlgebraicStructuresAndGeneralGaloisTheory,PoeGeneralGaloisTheoryForOperationsAndRelations}).
In other words, every clone \m{F} on \m{D} can be described by
\m{F=\Pol{Q}} for some set \m{Q} of relations
(cf.\ Section~\ref{sect:preliminaries} for the notation).
\par

In this paper we continue the investigations from~\cite{BehVar}
and~\cite{Var} concerning clones on a finite set \m{D} described by
relations from a special set \m{\CR}. They are named
\emph{clausal relations} and were originally introduced in~\cite{%
CreignouHermannKrokhinSalzerComplexityOfClausalConstraintsOverChains}.
A clausal relation is the set of all tuples over \m{D} satisfying
disjunctions of inequalities of the
form \m{x\geq d} and \m{x\leq d}, where \m{x,d} belong to the finite set
\m{D=\set{0,1,\dotsc,n-1}}.
\par

We are interested in understanding the structure of clones that are
determined by sets of clausal relations, so\dash{}called
\emph{\cclones{}}. Their lattice has been delineated completely in
Theorem~2.14 of~\cite{Var} for the case that \m{\abs{D}=2}.
In this paper we study the co\dash{}atoms in the
lattice of all \cclones{}, the \emph{maximal \cclones{}}, for an
arbitrary finite set \m{D}. Since every clone on \m{D} either equals
\m{\Op{D}} (the set of all finitary operations on \m{D}) or is contained
in some \emph{maximal clone} (co\dash{}atom of the lattice of all clones)
(see \eg~\cite[Haupt\-satz~3.1.5, p.~80; Voll\-stän\-dig\-keits\-kri\-te\-ri\-um~5.1.6, p.~123]{PoeKal}
or \cite[Proposition~1.15, p.~27]{SzendreiClonesInUniversalAlgebra}), our
aim is to investigate  which maximal \cclones{} are contained in which
maximal clones. We achieve a complete description in
Theorem~\ref{thm:char-relationship-max-clones-max-cclones} and
thereby answer the question that was left open
in~\cite{BehVarMaxCclonesPreprint}.
\par

Using \name{Rosenberg}'s theorem
(see Theorem~\ref{thm:Rosenberg-classification} below),
all maximal clones on \m{D} can be classified into six types.
From~\cite{BehVarMaxCclonesPreprint} we know already that a few of them,
\eg\ centralisers of prime permutations, polymorphism sets of an affine,
of a central relation of arity at least three or of an \nbdd{h}regular
relation, do not contain any maximal \cclone. We shall see that this
phenomenon extends to maximal clones of monotone functions with regard
to some bounded partial order whenever \m{\abs{D}\geq 3}.
\par

To our surprise, it turns out that every maximal \cclone\ is contained in
a unique maximal clone, either given as polymorphism set of a
non\dash{}trivial equivalence relation or a unary or binary central
relation (vide infra for a definition of such relations).
The respective details can be seen from our main result,
Theorem~\ref{thm:char-relationship-max-clones-max-cclones}. As a corollary
we also deduce a new completeness criterion for \cclones.
\par

We start by introducing our notation, recalling some fundamental facts
about the \name{Galois} theory for clones, the characterisation of maximal
clones and \cclones, respectively, and providing two basic lemmas in
Section~\ref{sect:preliminaries}. Then we devote one section each to
examine possible inclusions of maximal \cclones{} in maximal clones of
the form \m{\Pol{\rho}}, where \m{\rho} is a non\dash{}trivial unary
relation, a bounded partial order relation, a non\dash{}trivial
equivalence relation or an at least binary central relation. Finally, in
Section~\ref{sect:thm-statement}, we deduce our main theorem from the
previous results.

\section{Main notions and preliminaries}\label{sect:preliminaries}

Throughout the text, \m{D} will denote the finite non\dash{}empty set
\m{\set{0,\dotsc,n-1}} (\m{n>0}) and \m{\N=\set{0,1,2,\dotsc}} the set of
\emph{natural numbers}. We put \m{\Np\defeq\N\setminus\set{0}}. Moreover,
for a function \m{\functionhead{f}{A}{B}}, we denote its \emph{image} by
\m{\im\apply{f}\defeq \lset{f\apply{x}}{x\in A}}.
\par

Let \m{m \in \Np}. An \emph{\nbdd{m}ary relation} \m{\varrho} on \m{D}
is a subset of the \nbdd{m}fold Cartesian product \m{D^{m}}.
By \m{\Rel[m]{D}\defeq\powerset{D^m}} we denote the
\emph{set of all \nbdd{m}ary relations} on \m{D} and by
\m{\Rel{D}\defeq\bigcup_{m\in\Np}\Rel[m]{D}}
the \emph{set of all finitary relations} on \m{D}.
For a binary relation \m{\rho\subs D^2} we denote its \emph{inverse} by
\m{\rho^{-1}\defeq \lset{\apply{y,x}}{\apply{x,y}\in\rho}}.
\par

We want to study clones that are determined by sets of clausal relations.
Even though, for almost all results, we will need only binary clausal
relations, we define them here in full generality.
\begin{definition}\label{def:clausal-relation}
Let \m{p,q \in \Np}.
For given parameters \m{\bfa{a}=\apply{\liste{a}{p}}\in D^p}
and \m{\bfa{b}=\apply{\liste{b}{q}}\in D^q}, the \emph{clausal
relation} \m{\Rab} \emph{of arity} \m{p+q} is the
set of all tuples \m{\apply{\liste{x}{p},\liste{y}{q}} \in D^{p+q}}
satisfying
\begin{equation*}\label{ecuationclau}
(x_1\geq a_1)\vee \dotsm \vee (x_p\geq a_p)%
\vee(y_1\leq b_1)\vee \dotsm \vee(y_q\leq b_q ).
\end{equation*}
In this expression \m{\leq} denotes the canonical linear order on \m{D}
and \m{\geq} its dual.
\end{definition}

For \m{k \in \Np} we denote by
\m{\Op[k]{D}\defeq\rset{f}{f\colon D^{k}\to D}}
the \emph{set of all \nbdd{k}ary operations on \m{D}} and
by \m{\Op{D}\defeq\bigcup_{k\in\Np}\Op[k]{D}} the \emph{set of all
finitary operations on \m{D}}.\par

Next, we will consider  a \name{Galois} connection between sets of
operations and relations that is based on the so\dash{}called
\emph{preservation relation}. It is the most important tool for our
investigations.

\begin{definition}\label{def:preservation}
Let \m{m,k \in \Np}.
We say that a \nbdd{k}ary operation \m{f\in \Op[k]{D}}
\emph{preserves} an \nbdd{m}ary relation \m{\varrho \in \Rel[m]{D}},
denoted by \m{f\preserves \varrho}, if whenever
\[r_1=\apply{a_{11},\dotsc,a_{m1}}\in \varrho,
  \dotsc,
  r_k=\apply{a_{1k},\dotsc,a_{mk}}\in \varrho,\]
it follows that also \m{f} applied to these tuples belongs to
\m{\varrho}, \ie\
\[f \circ \apply{r_1,\dotsc,r_k}\defeq
\apply{f\apply{a_{11},\dotsc, a_{1k}},
       \dotsc,
       f\apply{a_{m1},\dotsc,a_{mk}}} \in \varrho.\]
\end{definition}
\par

For a set of operations \m{F\subseteq \Op{D}}, we
denote by \m{\Inv{F}} the set of all relations that are \emph{invariant}
for all operations \m{f\in F}, \ie\
\m{\Inv{F} \defeq\lset{\varrho \in \Rel{D}}{\forall f\in F\colon f
\preserves \varrho}}.
Similarly, for a set \m{Q \subseteq \Rel{D}} of relations,
\m{\Pol{Q}\defeq
         \lset{f\in F}{\forall \varrho \in Q\colon f \preserves \varrho}}
denotes the \emph{set of polymorphisms of \m{Q}}.
Furthermore, for \m{k\in \Np} we
abbreviate \m{\Pol[k]{Q}\defeq \Op[k]{D}\cap \Pol{Q}}.
Usually, we will write
\m{\Pol{\rho}} for \m{\Pol{\set{\rho}}}, \m{\rho \in \Rel{D}} and
\m{\Inv{f}} for \m{\Inv{\set{f}}}, \m{f \in \Op{D}}. The operators
\m{\Pol{}} and \m{\Inv{}} define the \name{Galois} connection
\m{\Pol{}{-}\Inv{}}.
\par

On a finite set \m{D} the \name{Galois} closed sets of
relations~\cite{GeigerClosedSystemsOfFunctionsAndPredicates} with
respect to \m{\Pol{}{-}\Inv{}} are exactly the so\dash{}called
\emph{relational clones}. These can be characterised as those sets of
finitary relations on \m{D} that are closed \wrt\ \emph{primitive
positively definable relations}, \ie\ those arising as interpretations of
first order formulæ where only predicate symbols corresponding to
relations from \m{Q}, falsity, variable identifications, finite
conjunctions and finite existential quantification are allowed. For a set
\m{Q\subs\Rel{D}} of relations, we denote by \m{\genRelClone{Q}} the
closure of \m{Q} with regard to such formulæ, which equals the least
relational clone generated by \m{Q}, \ie, by the above, we have
\m{\genRelClone{Q}=\Inv{\Pol{Q}}}.
\par

A relation \m{\rho\in\Rel{D}} is called \emph{trivial} if it is preserved
by every function, \ie\ if \m{\Pol{\rho}=\Op{D}}, or equivalently
\m{\rho\in\Inv{\Op{D}}}. The set of trivial relations \m{\Inv{\Op{D}}} can
be characterised to contain precisely all so\dash{}called \emph{diagonal
relations} (see
\eg~\cite[3.2~Definitions (R0), p.~25]{%
PoeGeneralGaloisTheoryForOperationsAndRelations}
or~\cite[p.~5]{BehClonesWithNullaryOperations}
for a definition), which are generalisations of
the binary diagonal relations \m{\Delta=\lset{\apply{x,x}}{x\in D}} and
\m{\nabla=D\times D}.
\par

A set  \m{F\subseteq \Op{D}} of operations is called a \emph{\cclone}
if \m{F=\Pol{Q}} for some set \m{Q} of clausal relations. All \cclones{}
on \m{D}, ordered by set inclusion, form a complete lattice, whose
co\dash{}atoms are called \emph{maximal \cclones}.
\par

From~\cite{Edith-thesis} we have a description of all maximal \cclones{}
on finite sets as polymorphism sets of binary clausal relations
\m{\Ruab=\lset{\apply{x,y}\in D^2}{x\geq a \,\vee\, y\leq b}}.
\begin{theorem}[\cite{Edith-thesis}]\label{teo_caracterization_cclonemax}
Let \m{M\subseteq \Op{D}} be a \cclone{}. \m{M} is maximal if and only if
there are elements \m{a\in \Dommenosa{0}} and \m{b \in \Dommenosa{n-1}}
such that \m{M= \Pol{\Ruab}}.
\end{theorem}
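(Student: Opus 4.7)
The plan is to use \name{Galois} duality: a \cclone{} \m{M} is maximal if and only if its relational counterpart \m{\Inv{M}\cap\CR} is minimal strictly above the set \m{\Inv{\Op{D}}\cap\CR} of trivial clausal relations. The whole argument then hinges on the following reduction of arbitrary clausal relations to binary ones, which I state as the key lemma.

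\emph{Reduction lemma.} Let \m{\sigma=\Rab} be non\dash{}trivial, with \m{\bfa{a}\in D^p} and \m{\bfa{b}\in D^q}; setting \m{a^{*}\defeq\min_{i\in\iindex}a_i} and \m{b^{*}\defeq\max_{j\in\jindex}b_j}, non\dash{}triviality of \m{\sigma} prevents any \m{a_i=0} or \m{b_j=n-1}, so \m{a^{*}\in\Dommenosa{0}} and \m{b^{*}\in\Dommenosa{n-1}}, and I claim \m{\Pol{\sigma}\subseteq\Pol{\Relab{(a^{*})}{(b^{*})}}}. The idea is to take \m{f\in\Pol[k]{\sigma}} together with pairs \m{\apply{x_s,y_s}\in\Relab{(a^{*})}{(b^{*})}} for \m{s=1,\dotsc,k}, and to form the ``collapsed'' tuples \m{t_s\defeq\apply{x_s,\dotsc,x_s,y_s,\dotsc,y_s}\in D^{p+q}} consisting of \m{p} copies of \m{x_s} followed by \m{q} copies of \m{y_s}. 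A direct inspection shows that \m{t_s\in\sigma} is equivalent to \m{x_s\geq a^{*}} or \m{y_s\leq b^{*}}, which holds by hypothesis; applying \m{f} componentwise then yields another tuple of the same collapsed shape lying in \m{\sigma}, whose membership condition forces \m{f\apply{x_1,\dotsc,x_k}\geq a^{*}} or \m{f\apply{y_1,\dotsc,y_k}\leq b^{*}}.

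\emph{Forward direction.} Assuming \m{M} maximal, I write \m{M=\Pol{Q}} for some \m{Q\subseteq\CR}. At least one \m{\sigma\in Q} is non\dash{}trivial (else \m{M=\Op{D}}), giving \m{M\subseteq\Pol{\sigma}\subsetneq\Op{D}} and hence \m{M=\Pol{\sigma}} by maximality. The reduction lemma then places \m{\Pol{\sigma}} inside \m{\Pol{\Relab{(a^{*})}{(b^{*})}}\subsetneq\Op{D}} for valid \m{a^{*},b^{*}}, and maximality again forces \m{M=\Pol{\Relab{(a^{*})}{(b^{*})}}}.

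\emph{Backward direction and main obstacle.} Conversely, starting from \m{M=\Pol{\Ruab}} with valid \m{a,b}, the tuple \m{\apply{a-1,b+1}} witnesses \m{\Ruab\neq D^2}, so \m{M\neq\Op{D}}. If \m{M\subsetneq M'} for a proper \cclone{} \m{M'=\Pol{Q'}}, I pick a non\dash{}trivial \m{\sigma'\in Q'} and apply the reduction lemma to obtain valid \m{a',b'} with \m{M\subsetneq M'\subseteq\Pol{\sigma'}\subseteq\Pol{\Relab{(a')}{(b')}}}. The \emph{main obstacle} is to derive a contradiction from this chain: it amounts to showing that two polymorphism clones of the maximal\dash{}candidate form arising from distinct valid parameter pairs are either equal or pairwise incomparable. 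My plan for this is to exhibit, in each case \m{\apply{a,b}\neq\apply{a',b'}}, a separating unary function \m{g\in\Pol{\Ruab}\setminus\Pol{\Relab{(a')}{(b')}}}, constructed by tailoring a step or small\dash{}support permutation of \m{D} to the forbidden rectangle \m{\set{0,\dotsc,a-1}\times\set{b+1,\dotsc,n-1}}; the finite case analysis over the relative positions of \m{\apply{a,b}} and \m{\apply{a',b'}} will be the principal technical burden I anticipate.
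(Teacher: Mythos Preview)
The paper does not actually prove Theorem~\ref{teo_caracterization_cclonemax}; it is quoted from~\cite{Edith-thesis} without argument, so there is no in\dash{}paper proof to compare against. The closest the paper comes is Lemma~\ref{lem:all-non-full-cclones-below-maximal-one}, whose proof invokes precisely your Reduction Lemma (cited there as Lemma~6.1.3 of~\cite{Edith-thesis}); your sketch of that lemma via collapsed tuples is correct.

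Your forward direction is complete. For the backward direction the structure is right and the obstacle you isolate is indeed the crux, but the case analysis is lighter than you fear. Using functions with image in \m{\set{0,\dotsc,b}} (resp.\ \m{\set{a,\dotsc,n-1}}) as in Lemma~\ref{lem:pres-of-max-clausal-rel}, combined with Lemma~\ref{lem:unary-witnesses}, one first shows that \m{\Pol{\Ruab}\subseteq\Pol{\Relab{(a')}{(b')}}} forces \m{a\geq a'} and \m{b\leq b'}. After that only two dual cases remain. If \m{a>a'}, the map \m{h} sending \m{a-1\mapsto 0} and fixing all other points preserves both \m{\set{0,\dotsc,b}} and \m{\set{a,\dotsc,n-1}} (hence lies in \m{\Pol{\Ruab}} by Lemma~\ref{lem:unary-f-pres-a-b-pres-Rab}), yet it carries \m{\apply{a-1,n-1}\in\Relab{(a')}{(b')}} to \m{\apply{0,n-1}\notin\Relab{(a')}{(b')}}. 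The case \m{b<b'} is handled dually by \m{b+1\mapsto n-1}. So your plan succeeds without any elaborate positional case split; simple ``collapse one boundary point'' maps suffice in place of the step functions or permutations you envisage.
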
\par

Likewise, the following characterisation of maximal clones on finite sets
is well known. The sorts of relations occurring in
Theorem~\ref{thm:Rosenberg-classification} will be defined below as far as
they are needed for later purposes.

\begin{theorem}[\cite{RosenbergMaximalClonesFrench,%
                      RosenbergMaximalClones}]%
\label{thm:Rosenberg-classification}
A clone \m{F\subs\Op{D}} is maximal if and only if it is of the
form \m{\Pol{\rho}}, where \m{\rho} is a non\dash{}trivial relation
belonging to one of the following classes:
\begin{enumerate}
 \item The set of all partial orders with least and greatest element.
 \item The set of all graphs of prime permutations.
 \item The set of all non\dash{}trivial%
       \footnote{%
       Here \emph{non\dash{}trivial} means
       \m{\Eq(D)\setminus\set{\Delta,\nabla}}.}
       equivalence relations.
 \item\label{case:affine-relations}
       The set of all affine relations \wrt\ some elementary
       \name{Abel}ian \nbdd{p}group on \m{D} for some prime \m{p}.
 \item The set of all central relations of arity \m{h}
       (\m{1 \leq h < \abs{D}}).
 \item The set of all \nbdd{h}regular relations
       (\m{3 \leq h \leq \abs{D}}).
\end{enumerate}
\end{theorem}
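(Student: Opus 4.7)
The plan is to prove the two directions separately: first, that each of the six listed classes of relations produces a maximal clone, and second, that no other maximal clones exist. Both directions rely heavily on the \name{Galois} connection $\Pol{}{-}\Inv{}$: a clone $F$ is maximal iff it is properly contained in $\Op{D}$ and $\Op{D}$ is the only clone strictly above it, which by \name{Galois} duality corresponds to $\Inv{F}$ being an atom above $\Inv{\Op{D}}$ in the lattice of relational clones (modulo trivial relations).

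For the easier direction, I would handle the six types of relations one by one and show that for a relation $\rho$ of each type, (i) $\Pol{\rho}\neq\Op{D}$ (by exhibiting any non\dash{}preserving operation, or noting that $\rho$ is non\dash{}trivial), and (ii) for every $f\notin\Pol{\rho}$ the clone generated by $\Pol{\rho}\cup\set{f}$ is all of $\Op{D}$. Step (ii) amounts to a small completeness criterion tailored to each relation type: for bounded partial orders one uses a \name{Post}-style argument on monotone operations together with the fact that $f$ violating monotonicity allows one to produce constants of $0$ and $1$ and thence all operations; for prime permutations one argues via centralisers and unary clones generated by $f$; for equivalence relations one reduces to the quotient and product structure; for affine relations one uses linearity over an elementary \name{Abel}ian $p$-group; for central relations and $h$-regular relations one applies \name{Rosenberg}'s own \name{S\l upecki}-type lemmas on essentially unary reductions. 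In each case, the key observation is that the invariant relations of $\genClone{\Pol{\rho}\cup\set{f}}$ form a proper subset of $\Inv{\Pol{\rho}}=\genRelClone{\set{\rho}}$, yet cover every relation $\rho'$ sharing the same structural type; one shows this forces the closure to be $\Inv{\Op{D}}$.

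For the harder direction, I would start with a maximal clone $F=\Pol{Q}$ and pick a non\dash{}trivial relation $\rho\in\Inv{F}\setminus\Inv{\Op{D}}$ of minimal arity (and, among those, of minimal cardinality), since by \name{Geiger}/\name{Bodnarchuk} on a finite set $F=\Pol{\Inv{F}}$. The main classification is driven by the structural type of this minimal $\rho$ and the operations $F$ must contain. The case split proceeds along the lines: analyse the unary part of $F$ and the orbits of $\Pol[1]{\rho}$; if $\rho$ is unary, then $\rho$ itself is central of arity one, contradicting maximality unless $F$ already sits inside a central clone of arity~$1$; otherwise, consider binary projections and relational products to reduce to a ``simplest'' generating relation and invoke \name{Rosenberg}'s structural dichotomies (reflexive vs.\ irreflexive, totally reflexive vs.\ not, symmetric vs.\ not, transitive vs.\ not). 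Each branch of the dichotomy, combined with minimality, forces $\rho$ into exactly one of the six classes.

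The main obstacle is without doubt the second direction: \name{Rosenberg}'s argument is a deep and intricate piece of universal algebra whose detailed case analysis of minimal invariant relations occupies most of~\cite{RosenbergMaximalClonesFrench,RosenbergMaximalClones}. In particular, separating the central and $h$-regular cases from the affine case (all of which give highly symmetric relations) requires delicate combinatorial arguments about how a putative minimal $\rho$ can fail to be of a given type, and I would expect that any honest write\dash{}up would essentially have to reproduce these arguments. For our purposes, however, we may simply cite the result, as we use only the enumeration of types and not the internal structure of its proof.
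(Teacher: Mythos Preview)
The paper does not prove this theorem at all: it is stated as a classical result and attributed to \name{Rosenberg} via the citations~\cite{RosenbergMaximalClonesFrench,RosenbergMaximalClones}, with no accompanying argument. Your closing remark already anticipates this---the theorem is used purely as a black box providing the six-fold case distinction for maximal clones, and nothing in the paper depends on how it is proved.

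Your sketch is a fair high-level outline of the shape of \name{Rosenberg}'s original argument (minimal invariant relation, structural dichotomies, case-by-case verification of maximality), and there is no mathematical error in what you wrote. But it is not a proof, only a roadmap; the actual content---especially the classification of minimal invariant relations in the ``harder direction''---is, as you yourself note, a long and delicate case analysis that cannot be compressed into a paragraph. Since the paper's own treatment is simply to cite the result, the appropriate comparison is: the paper cites, you sketch and then cite. For the purposes of this paper your final sentence is the operative one, and the preceding sketch, while not wrong, is superfluous.
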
\par

In~\cite{BehVarMaxCclonesPreprint} it has been shown that
\m{\Pol{\Ruab}\not\subs\Pol{\rho}} whenever \m{\rho} is the
graph of a prime permutation, an affine relation \wrt\ some elementary
\name{Abel}ian \nbdd{p}group or an at least ternary central or
\nbdd{h}regular relation. The remaining types of relations from
\name{Rosenberg}'s theorem are bounded orders, non\dash{}trivial
equivalences and unary and binary central relations.
\par

A \emph{central relation} is a totally symmetric, totally reflexive
relation having a central element and not being a diagonal relation.
\emph{Total symmetry} means closure under all permutations of entries of
tuples; \emph{total reflexivity} requires that every tuple having two
identical entries has to belong to the relation. An element \m{c\in D} is
\emph{central} for \m{\rho} if any tuple containing \m{c} as an entry is a
member of \m{\rho}.
\par

The only unary diagonal relations are \m{\emptyset} and \m{D}, the binary
ones are \m{\Delta} and \m{D\times D}.
Therefore, unary central relations are precisely all subsets
\m{\emptyset\subsetneq\rho\subsetneq D}. Binary central relations can be
described as follows. Note that for binary relations the notions of
total symmetry and total reflexivity coincide with ordinary symmetry and
reflexivity, respectively. For \m{c\in D} let
\m{\rho_{c}\defeq \Delta \cup \apply{\set{c}\times D}
                         \cup \apply{D\times\set{c}}} and
\m{A_c\defeq \lset{\apply{x,y}\in D^2\setminus\rho_{c}}{x<y}
 = \lset{\apply{x,y}\in\apply{\Dommenosa{c}}^2}{x<y}}.
For any \m{S_c\ovflhbx{1.24381pt}\subsetneq\ovflhbx{1.24380pt} A_c}
we have a binary central relation
\m{\rho_{c,S_c}\defeq \rho_{c}\cup S_c \cup S_c^{-1}}, and it is easy to
see that all of them arise in this way. Note that for \m{n=\abs{D}=3} we
always have \m{S_c=\emptyset} as \m{A_c} contains only one pair.
\par

Supposing \m{\abs{D}\geq 3}, the goal of the following sections is to
understand completely, for which parameters  \m{a\in\Dommenosa{0}},
\m{b\in\Dommenosa{n-1}} and which relations \m{\rho} from
Theorem~\ref{thm:Rosenberg-classification} we have the inclusion
\m{\Pol{\Ruab}\subs\Pol{\rho}}.
\par

To realise this, we may want to use unary functions
\m{f\in\Pol[1]{\Ruab}\setminus\Pol{\rho}} as witnesses for
\m{\Pol{\Ruab}\not\subs\Pol{\rho}}, where \m{\Pol{\rho}} is a maximal
clone. The following lemma gives a simple sufficient condition for
functions \m{f\in\Op[1]{A}} to preserve \m{\Ruab}.
\par

\begin{lemma}\label{lem:pres-of-max-clausal-rel}
For \m{a,b\in D} and every \m{f\in\Op[1]{D}} such that
\m{\im\apply{f}\subs\set{0,\dotsc,b}} or dually
\m{\im\apply{f}\subs\set{a,\dotsc,n-1}}, we always have
\m{f\in\Pol{\Ruab}}.
\end{lemma}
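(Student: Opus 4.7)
The plan is to unfold the definition of the preservation relation in the unary case and observe that one of the two disjuncts defining $\Ruab$ is automatically enforced by the image constraint on $f$.

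Concretely, fix $f\in\Op[1]{D}$ with, say, $\im(f)\subs\{0,\dotsc,b\}$, and let $(x,y)\in\Ruab$ be arbitrary; we must verify $(f(x),f(y))\in\Ruab$, i.e.\ that $f(x)\geq a$ or $f(y)\leq b$ holds. Since $f(y)\in\im(f)\subs\{0,\dotsc,b\}$, the second disjunct $f(y)\leq b$ is satisfied, and we are done (in fact, the membership $(x,y)\in\Ruab$ is not even used). The dual case $\im(f)\subs\{a,\dotsc,n-1\}$ is entirely symmetric: here $f(x)\in\im(f)\subs\{a,\dotsc,n-1\}$ forces the first disjunct $f(x)\geq a$, so again $(f(x),f(y))\in\Ruab$.

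There is essentially no obstacle; the lemma is a direct consequence of the disjunctive shape of the defining condition of $\Ruab$ in Definition~\ref{def:clausal-relation} and will serve later as a convenient toolbox for producing witnesses $f\in\Pol[1]{\Ruab}\setminus\Pol{\rho}$.
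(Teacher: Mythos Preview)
Your proof is correct and follows exactly the same approach as the paper's own proof: in each case one of the two disjuncts defining $\Ruab$ is forced by the image constraint on $f$. The only difference is cosmetic---you spell out the preservation condition a bit more explicitly and remark that the hypothesis $(x,y)\in\Ruab$ is not actually needed.
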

\begin{proof}
If \m{\im\apply{f}\subs\set{0,\dotsc,b}}, then we have \m{f(y)\leq b} for
all \m{\apply{x,y}\in\Ruab} and so
\m{f\preserves\Ruab}. If \m{\im\apply{f}\subs\set{a,\dotsc,n-1}}, then
likewise \m{f\apply{x}\geq a} for all \m{\apply{x,y}\in\Ruab} and also
\m{f\preserves\Ruab}.
\end{proof}
\par

When constructing unary functions
\m{f\in\Pol[1]{\Ruab}\setminus\Pol{\rho}} as witnesses for
non\dash{}inclusions \m{\Pol{\Ruab}\not\subs\Pol{\rho}}, where
\m{\Pol{\rho}} is a maximal clone, it is helpful to know how much choice
we have for \m{f}. We cannot achieve a converse to
Lemma~\ref{lem:pres-of-max-clausal-rel}, but the following result seems to
be as good as we can get in this respect.
\par
\begin{lemma}\label{lem:unary-witnesses}
For \m{a,b\in D} and every \m{f\in\Pol[1]{\Ruab}} the
following conditions hold:
\begin{enumerate}[(a)]
\item\label{item:f-pres-b-or-im-a}
      \m{f\preserves \set{0,\dotsc,b}} or
      \m{\im\apply{f}\subs\set{a,\dotsc,n-1}}.
\item\label{item:f-pres-a-or-im-b}
      \m{f\preserves \set{a,\dotsc,n-1}} or
      \m{\im\apply{f}\subs\set{0,\dotsc,b}}.
\item\label{item:f-pres-a-or-b}
      \m{f\preserves \set{a,\dotsc,n-1}} or
      \m{f\preserves \set{0,\dotsc,b}}.
\end{enumerate}
\end{lemma}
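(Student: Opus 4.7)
The plan is to rewrite the preservation condition for $\Ruab$ in contrapositive form and then, for each of the three clauses, derive a contradiction from the assumption that both disjuncts fail by producing explicit witnesses and feeding them into that contrapositive.

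More precisely, the complement of $\Ruab$ in $D^2$ is $\set{0,\dotsc,a-1}\times\set{b+1,\dotsc,n-1}$, so $f\preserves \Ruab$ is equivalent to the implication
\begin{equation*}
 f\apply{x}<a \;\wedge\; f\apply{y}>b \;\Longrightarrow\; x<a \;\wedge\; y>b
\end{equation*}
for all $x,y\in D$. I would state this observation once at the beginning and refer back to it in each part.

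For part~\eqref{item:f-pres-b-or-im-a}, suppose neither disjunct holds. Then there is $y_{0}\in\set{0,\dotsc,b}$ with $f\apply{y_{0}}>b$ (because $f$ fails to preserve $\set{0,\dotsc,b}$) and there is $x_{0}\in D$ with $f\apply{x_{0}}<a$ (because $\im\apply{f}\not\subs\set{a,\dotsc,n-1}$). Applying the contrapositive form above to the pair $\apply{x_{0},y_{0}}$ yields $y_{0}>b$, contradicting $y_{0}\leq b$. Part~\eqref{item:f-pres-a-or-im-b} is entirely dual: witnesses $x_{0}\in\set{a,\dotsc,n-1}$ with $f\apply{x_{0}}<a$ and $y_{0}\in D$ with $f\apply{y_{0}}>b$ force $x_{0}<a$ by the same implication, again contradicting the choice of $x_{0}$.

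For part~\eqref{item:f-pres-a-or-b} I would simply deduce it from~\eqref{item:f-pres-b-or-im-a}: if the first alternative of~\eqref{item:f-pres-b-or-im-a} holds, we are done; otherwise $\im\apply{f}\subs\set{a,\dotsc,n-1}$, which means $f\apply{x}\geq a$ for every $x\in D$, so $f$ trivially preserves $\set{a,\dotsc,n-1}$. The only point requiring a moment's care is the handling of the boundary cases $a=0$ and $b=n-1$, but in those cases one of the sets $\set{a,\dotsc,n-1}$ or $\set{0,\dotsc,b}$ equals $D$ and the corresponding disjuncts hold vacuously, so the argument is uniform. I do not anticipate any serious obstacle; the crux is purely the translation of preservation into its contrapositive.
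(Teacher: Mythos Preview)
Your argument is correct and essentially matches the paper's proof: both pick a witness $y\leq b$ with $f(y)>b$ from the failure of $f\preserves\set{0,\dotsc,b}$ and then use preservation of $\Ruab$ to force $f(x)\geq a$, with~(b) handled dually and~(c) deduced from~(a) via $\im(f)\subs\set{a,\dotsc,n-1}\Rightarrow f\preserves\set{a,\dotsc,n-1}$. The only cosmetic difference is that you phrase preservation in contrapositive form and argue by contradiction, whereas the paper assumes one disjunct fails and derives the other directly; the logical content is identical.
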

\begin{proof}
Statement~\eqref{item:f-pres-a-or-b} follows
from~\eqref{item:f-pres-b-or-im-a} since the condition
\m{\im\apply{f}\subs\set{a,\dotsc,n-1}} implies
\m{f\preserves\set{a,\dotsc,n-1}}. The proof of
statement~\eqref{item:f-pres-a-or-im-b} is dual to that
of~\eqref{item:f-pres-b-or-im-a}, so we only deal with the latter one. If
\m{f\mathrel{\npres}\set{0,\dotsc,b}}, then there exists some \m{y\leq b}
such that \m{f\apply{y}>b}. This means we have \m{\apply{x,y}\in\Ruab}
for all \m{x\in D}. Since \m{f\preserves \Ruab}, we obtain
\m{\apply{f(x),f(y)}\in\Ruab}, \ie\ \m{f(x)\geq a} due to
\m{f(y)>b}.
\end{proof}
\par

Using more sophisticated constructions of binary witnesses, we will first
be attacking the case of maximal clones \m{\Pol{\rho}} given by
non\dash{}trivial unary relations \m{\emptyset\subsetneq\rho\subsetneq D}.

\section{\texorpdfstring{Non\dash{}trivial unary relations}{%
         Non-trivial unary relations}}%
\label{sect:unary-rels}

The following lemma gives sufficient conditions for binary operations to
belong to a given maximal \cclone{}.
\begin{lemma}\label{lem:binary-f-pres-Rab}
Let \m{a,b\in D} and suppose \m{f\in\Op[2]{D}} satisfies
\m{f\apply{x,y}\leq b} for all pairs
%\m{\apply{x,y}\in D^2} where \m{x\leq b} or
%\m{y\leq b}, and \m{f\apply{x,y}\geq a} for all \m{\apply{x,y}\in D^2}
%where \m{x,y\geq a}.%
\bgroup\def\sqz{\ovflhbx{0.5pt}}%
\m{\apply{x,y}\sqz\in\sqz D^2} where \m{x\sqz\leq\sqz b} or
\m{y\sqz\leq\sqz b}, and \m{f\sqz\sqz\apply{x,y}\sqz\geq\sqz a} for all
\m{\apply{x,y}\sqz\in\sqz D^2}
where \m{x,y\sqz\geq\sqz a}.\egroup% keep def of \sqz local
\footnote{Such functions exist most easily, if
\m{a>b}, but also for \m{a\leq b}.} Then \m{f\in\Pol{\Ruab}}.
\par
Dually, if $f(x,y)\geq a$ for all $(x,y)\in D^2$ such that
\m{x\geq a} or
%\m{y\geq a}, and \m{f\apply{x,y}\leq b}
\m{y\ovflhbx{1pt}\geq\ovflhbx{1pt} a}, and
\m{f\ovflhbx{1pt}\apply{x,y}\ovflhbx{1pt}\leq\ovflhbx{1pt} b}
for those pairs \m{\apply{x,y}\in D^2} where \m{x,y\leq b}, then
\m{f\in\Pol{\Ruab}}, too.
\end{lemma}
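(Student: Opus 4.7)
The plan is to verify the preservation condition of Definition~\ref{def:preservation} directly. Fix arbitrary tuples \m{\apply{x_1,y_1},\apply{x_2,y_2}\in\Ruab}; to conclude \m{f\in\Pol{\Ruab}}, it suffices to exhibit \m{f\apply{x_1,x_2}\geq a} or \m{f\apply{y_1,y_2}\leq b}, since then \m{\apply{f\apply{x_1,x_2},f\apply{y_1,y_2}}\in\Ruab} by the defining disjunction of \m{\Ruab}.
\par
For the first statement I would perform a case distinction on whether both \m{x_1\geq a} and \m{x_2\geq a} hold. If yes, the second hypothesis immediately yields \m{f\apply{x_1,x_2}\geq a}. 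If no, there exists some \m{i\in\set{1,2}} with \m{x_i<a}, and the membership \m{\apply{x_i,y_i}\in\Ruab} then forces \m{y_i\leq b}. Since the first hypothesis is phrased symmetrically in its two coordinate premises (``\m{x\leq b} or \m{y\leq b}''), the pair \m{\apply{y_1,y_2}} having at least one coordinate \m{\leq b} is enough to conclude \m{f\apply{y_1,y_2}\leq b}, regardless of whether \m{i=1} or \m{i=2}. This settles the first half of the lemma.
\par
The dual assertion follows by the same argument with the roles of the two defining disjuncts of \m{\Ruab} exchanged. Concretely, if \m{y_1,y_2\leq b}, the second dual hypothesis gives \m{f\apply{y_1,y_2}\leq b}; otherwise some \m{y_i>b}, which by \m{\apply{x_i,y_i}\in\Ruab} forces \m{x_i\geq a}, and then the first dual hypothesis, again symmetric in its two coordinate premises, produces \m{f\apply{x_1,x_2}\geq a}.
\par
No substantial obstacle is foreseen; the argument is a short case chase driven by the clausal shape of \m{\Ruab}. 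The one subtle point is that \m{f} is not assumed symmetric in its arguments, so one must consciously exploit the symmetry present in the coordinate premises of the hypotheses, which is precisely what allows the case split on \m{i\in\set{1,2}} to collapse into a single uniform conclusion.
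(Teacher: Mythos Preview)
Your proof is correct and essentially the same as the paper's. The only cosmetic difference is the pivot of the case distinction: the paper splits on whether \m{f\apply{y_1,y_2}\leq b} and uses the contrapositive of the first hypothesis to deduce \m{y_1,y_2>b} (hence \m{x_1,x_2\geq a}), whereas you split on whether \m{x_1,x_2\geq a} and argue forward; the underlying logic and the use of the two hypotheses are identical.
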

\begin{proof}
Let \m{\apply{x_1,y_1},\apply{x_2,y_2}\in\Ruab}. If
\m{f\apply{y_1,y_2}\leq b}, then
\m{\apply{f\apply{x_1,x_2},f\apply{y_1,y_2}}} belongs to
\m{\Ruab} and we are
done. Else, by the assumption on \m{f} we must have \m{y_1,y_2>b}, which
implies \m{x_1,x_2\geq a} due to
\m{\apply{x_1,y_1},\apply{x_2,y_2}\in\Ruab}. Therefore,
\m{f\apply{x_1,x_2}\geq a}, which implies again
\m{\apply{f\apply{x_1,x_2},f\apply{y_1,y_2}}\in\Ruab}.
This proves that \m{f\in\Pol{\Ruab}}.
The proof of the second claim is by dualisation.
\end{proof}

We can use this type of functions to witness non\dash{}inclusions of
maximal \cclones\xspace in maximal clones given by a non\dash{}trivial
unary relation \m{\rho} whenever there exists some \m{x\in\rho}
respecting \m{b<x<a}.

\begin{corollary}\label{cor:unary-nonincl-for-b<x<a}
Let \m{a,b\in D} and suppose \m{\rho\subsetneq D} contains an element
\m{x\in\rho} such that \m{b<x<a}. Every binary function \m{f\in\Op[2]{D}}
satisfying one of the conditions from Lemma~\ref{lem:binary-f-pres-Rab}
and mapping \m{f\apply{x,x}=y} where \m{y\in D\setminus\rho} fulfils
\m{f\in\Pol[2]{\Ruab}\setminus\Pol{\rho}}. Such functions exist indeed,
whence we have \m{\Pol{\Ruab}\not\subseteq \Pol{\rho}}.
\end{corollary}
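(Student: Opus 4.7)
My plan is to split the corollary into the two preservation checks and the existence assertion, tackling the latter carefully. First, if $f$ satisfies one of the two alternatives of Lemma~\ref{lem:binary-f-pres-Rab}, then $f\in\Pol[2]{\Ruab}$ follows directly from that lemma. For the non-preservation of the unary relation $\rho$, I would observe that $f\preserves\rho$ would require $f\apply{u,v}\in\rho$ whenever $u,v\in\rho$; applied to $u=v=x\in\rho$ this contradicts the prescription $f\apply{x,x}=y\in D\setminus\rho$, so $f\notin\Pol{\rho}$.

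The substantive content is the construction of such an $f$. Here the strict inequalities $b<x<a$ are the decisive ingredient: they exclude the diagonal pair $\apply{x,x}$ from both constraint regions of the first clause of Lemma~\ref{lem:binary-f-pres-Rab}. Indeed, $x>b$ ensures that $\apply{x,x}$ does not satisfy $x'\leq b$ or $y'\leq b$, while $x<a$ rules out $x',y'\geq a$. Consequently I would set $f\apply{x,x}\defeq y$ for some $y\in D\setminus\rho$ (available because $\rho\subsetneq D$), put $f\apply{x',y'}\defeq 0$ on the region where $x'\leq b$ or $y'\leq b$, put $f\apply{x',y'}\defeq n-1$ on the region where $x',y'\geq a$, and assign any value elsewhere.

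The only potential obstacle is that these two regional prescriptions might conflict. However, $b<x<a$ implies $b<a$, so the regions are disjoint, and the chosen values obey $0\leq b$ and $n-1\geq a$; hence the first alternative of Lemma~\ref{lem:binary-f-pres-Rab} is met, yielding $f\in\Pol[2]{\Ruab}$. The non-inclusion $\Pol{\Ruab}\not\subseteq\Pol{\rho}$ then follows at once from $f\in\Pol[2]{\Ruab}\subseteq\Pol{\Ruab}$ and $f\notin\Pol{\rho}$.
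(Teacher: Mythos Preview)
Your proof is correct and follows essentially the same approach as the paper: both argue the two membership claims directly from Lemma~\ref{lem:binary-f-pres-Rab} and the prescription $f(x,x)=y\notin\rho$, and then construct a witness by exploiting that $b<x<a$ puts $(x,x)$ outside both constraint regions of the first alternative. The only cosmetic differences are the specific values chosen (the paper uses $a$ and $0$ where you use $n-1$ and $0$, and fixes the residual region to $0$ rather than leaving it arbitrary).
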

\begin{proof}
Since \m{f\in\Op[2]{D}} fulfils the conditions of
Lemma~\ref{lem:binary-f-pres-Rab}, we get \m{f\in\Pol{\Ruab}}; further,
the assumption \m{f\apply{x,x}=y} where \m{x\in \rho} and \m{y\notin\rho}
ensures that \m{f\notin\Pol{\rho}}.
\par
For the existence of such operations, verify that the following function
is well\dash{}defined due to \m{b<x<a}: we put \m{f\apply{u,v}\defeq a} if
\m{u,v\geq a}, \m{f\apply{x,x}\defeq y\notin\rho} and
\m{f\apply{u,v}\defeq 0\leq b} everywhere else. So \m{f} satisfies the first
condition from Lemma~\ref{lem:binary-f-pres-Rab}.
\end{proof}

In the next step we derive a necessary condition concerning the form of
the unary relation \m{\rho} that has to hold if
\m{\Pol{\Ruab}\subs\Pol{\rho}}.
\begin{lemma}\label{lem:unary-rels-cond}
For \m{a,b\in D} and a non\dash{}empty unary relation
\m{\emptyset\subsetneq\rho\subs D}, the inclusion
\m{\Pol{\Ruab}\subs\Pol{\rho}} implies
\m{\set{0,\dotsc,b}\cup \set{a,\dotsc,n-1}\subs \rho}.
\end{lemma}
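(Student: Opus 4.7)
The plan is to prove the contrapositive: assuming \m{\set{0,\dotsc,b}\cup\set{a,\dotsc,n-1}\not\subs\rho}, there exists some \m{z\in\set{0,\dotsc,b}\cup\set{a,\dotsc,n-1}} with \m{z\notin\rho}, and I would exhibit an explicit unary witness \m{f\in\Pol[1]{\Ruab}\setminus\Pol{\rho}} to refute \m{\Pol{\Ruab}\subs\Pol{\rho}}.

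The natural candidate is the unary constant function \m{f} with value \m{z}. Its image \m{\set{z}} is contained in \m{\set{0,\dotsc,b}} (when \m{z\leq b}) or in \m{\set{a,\dotsc,n-1}} (when \m{z\geq a}), so in either case Lemma~\ref{lem:pres-of-max-clausal-rel} immediately delivers \m{f\in\Pol{\Ruab}}. To check that \m{f\notin\Pol{\rho}}, I would invoke the hypothesis \m{\rho\neq\emptyset} and pick some \m{x\in\rho}; since \m{f\apply{x}=z\notin\rho}, the function \m{f} fails to preserve \m{\rho}.

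There is no genuine obstacle here; the lemma is essentially a one\dash{}line consequence of Lemma~\ref{lem:pres-of-max-clausal-rel} combined with \m{\rho\neq\emptyset}. I regard this result as a preparatory necessary condition, to be combined with a converse sufficiency argument later in the section so as to characterise exactly which non\dash{}trivial unary relations \m{\rho} satisfy \m{\Pol{\Ruab}\subs\Pol{\rho}}; the harder work will lie in that converse and in the subsequent sections treating bounded orders, equivalences, and binary central relations, where constant functions no longer suffice and the more delicate binary witnesses from Lemma~\ref{lem:binary-f-pres-Rab} and Corollary~\ref{cor:unary-nonincl-for-b<x<a} come into play.
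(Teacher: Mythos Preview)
Your proof is correct and follows essentially the same approach as the paper: both argue by contraposition using the unary constant function with value \m{z} (the paper calls it \m{c_x}), invoking Lemma~\ref{lem:pres-of-max-clausal-rel} to get membership in \m{\Pol{\Ruab}} and the non\dash{}emptiness of \m{\rho} to witness failure of preservation. Your write\dash{}up is in fact slightly more explicit about both of these points than the paper's own version.
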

\begin{proof}
If there existed some \m{x\leq b} such that \m{x\notin \rho}, then
\m{c_{x}\in\Pol{\Ruab\setminus\Pol{\rho}}}, where \m{c_x} denotes the
unary constant with value \m{x}, would contradict
the assumption \m{\Pol{\Ruab}\subs\Pol{\rho}}.
For \m{x\geq a} not belonging to \m{\rho} we use a similar argument.
\end{proof}

As a partial converse the next result establishes a sufficient condition
for an inclusion of a maximal \cclone\xspace in a maximal clone given by a
non\dash{}trivial unary relation.
\par

\begin{lemma}\label{lem:Rab-entails-0b-an-1}
Let \m{a,b\in D} such that \m{a>b}. Then we have
\begin{align*}
\Ruab\cap\apply{\Ruab}^{-1}
    &=\set{0,\dotsc,b}^2\cup\set{a,\dotsc,n-1}^2 \text{ and}\\
\lset{x\in D}{%
      \apply{x,x}\in\Ruab\cap\apply{\Ruab}^{-1}}
    &=\set{0,\dotsc,b}\cup\set{a,\dotsc,n-1},
\end{align*}
whence
\m{\Pol{\Ruab}\subs\Pol{\set{0,\dotsc,b}\cup\set{a,\dotsc,n-1}}}.
\end{lemma}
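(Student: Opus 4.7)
My plan is to prove the two set equalities by a direct computation from the definition of $\Ruab$ and then to deduce the inclusion of polymorphism sets by observing that the unary relation $\set{0,\dotsc,b}\cup\set{a,\dotsc,n-1}$ is primitive positively definable from $\Ruab$.

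For the first equality I would unfold the definition: $\Ruab = \lset{(x,y)\in D^2}{x\geq a \vee y\leq b}$, and therefore $(\Ruab)^{-1} = \lset{(x,y)\in D^2}{y\geq a \vee x\leq b}$. A pair $(x,y)$ lies in the intersection iff both disjunctions hold, and distributing the conjunction over the disjunction yields four cases: $(x\geq a \wedge y\geq a)$, $(x\geq a \wedge x\leq b)$, $(y\leq b \wedge y\geq a)$, $(y\leq b \wedge x\leq b)$. The key simplifying step is the hypothesis $a>b$: it rules out the two mixed cases entirely, since no element of $D$ satisfies both $z\geq a$ and $z\leq b$. What remains is exactly $\set{a,\dotsc,n-1}^2 \cup \set{0,\dotsc,b}^2$.

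The second equality then follows by specialising to $x=y$: the diagonal pair $(x,x)$ belongs to the intersection iff $x\in\set{a,\dotsc,n-1}$ or $x\in\set{0,\dotsc,b}$, giving the claimed union.

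Finally, for the polymorphism inclusion, I would argue by primitive positive definability. The formula $\varphi(x) \equiv \Ruab(x,x)$ is pp over the signature $\set{\Ruab}$ and defines $\lset{x\in D}{(x,x)\in\Ruab}$; similarly, $\psi(x) \equiv \Ruab(x,x) \wedge (\exists y)(\Ruab(x,y)\wedge\Ruab(y,x)\wedge y\approx x)$ --- or more directly, since diagonal identifications of variables are permitted in pp-formul{\ae}, the formula $\Ruab(x,x)\wedge\Ruab(x,x)$ with the second clause read as $(\Ruab)^{-1}(x,x)$ obtained by swapping arguments --- defines $\lset{x\in D}{(x,x)\in\Ruab\cap(\Ruab)^{-1}}$, which by the first two equalities equals $\set{0,\dotsc,b}\cup\set{a,\dotsc,n-1}$. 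Since pp-definability preserves polymorphisms (this is the standard fact that $\genRelClone{\set{\Ruab}}\subs\Inv(\Pol(\Ruab))$ on a finite domain), every $f\in\Pol(\Ruab)$ preserves this unary set, proving $\Pol(\Ruab)\subs\Pol(\set{0,\dotsc,b}\cup\set{a,\dotsc,n-1})$. No step is particularly delicate; the only point to watch is making the pp-definition of the diagonal unary set clean, which is trivial once one notes $(x,x)\in(\Ruab)^{-1}\iff(x,x)\in\Ruab$ after the variable identification.
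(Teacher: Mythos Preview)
Your proof is correct and follows essentially the same route as the paper: compute $\Ruab\cap(\Ruab)^{-1}$ directly from the definition (the paper does a small case split on $x\geq a$ versus $x<a$, you distribute the conjunction over the disjunctions --- same content), obtain the unary set by identifying variables, and then invoke pp-definability to get the $\Pol$ inclusion. Your exposition of the pp step is a little tangled --- the formula $\Ruab(x,x)$ alone already defines the desired unary set, since $(x,x)\in(\Ruab)^{-1}$ is automatic --- but the argument is sound and matches the paper's.
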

\begin{proof}
The second equality stated in the lemma will follow by variable
identification from
\m{\Ruab\cap\apply{\Ruab}^{-1}
    =\set{0,\dotsc,b}^2\cup\set{a,\dotsc,n-1}^2}.
In this equality the inclusion ``\m{\sups }'' is evident, so let us now consider
\m{\apply{x,y}\in\Ruab\ni\apply{y,x}}.
If \m{x\geq a > b}, then \m{\apply{y,x}\in\Ruab} implies
\m{y\geq a}, thus, \m{\apply{x,y}\in \set{a,\ldots,n-1}^2}.
Otherwise, we have \m{x<a}, such that \m{y\leq b<a} due to
\m{\apply{x,y}\in\Ruab}. So it follows \m{x\leq b} as \m{y<a} and
\m{\apply{y,x}\in\Ruab}. Hence, \m{\apply{x,y}\in\set{0,\dotsc,b}^2}.
\par

The second equality in the lemma implies
\m{\set{0,\dotsc,b}\cup\set{a,\dotsc,n-1}\in\genRelClone{\Ruab}},
and therefore,
\m{\Pol{\apply{\set{0,\dotsc,b}\cup\set{a,\dotsc,n-1}}}
\sups\Pol{\genRelClone{\Ruab}}
                 =\Pol{\Ruab}}.
\end{proof}

The following lemma solves the task for non\dash{}trivial unary relations.
\begin{lemma}\label{lem:incl-unary-rels}
Let \m{a,b\in D} and \m{\emptyset\subsetneq \rho\subsetneq D} be a unary
non\dash{}trivial relation. Then the inclusion
\m{\Pol{\Ruab}\subs\Pol{\rho}} holds if and only if
\m{\rho=\set{0,\dotsc,b}\cup\set{a,\dotsc,n-1}}
and \m{a-b\geq 2}.
\end{lemma}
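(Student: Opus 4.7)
The plan is to combine the previous four results—Lemma~\ref{lem:unary-rels-cond}, Corollary~\ref{cor:unary-nonincl-for-b<x<a}, and Lemma~\ref{lem:Rab-entails-0b-an-1}—to bracket $\rho$ from above and below.

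For the \emph{necessity} direction, assume $\Pol{\Ruab}\subs\Pol{\rho}$. The lower bound $\set{0,\dotsc,b}\cup\set{a,\dotsc,n-1}\subs\rho$ is immediate from Lemma~\ref{lem:unary-rels-cond}. For the reverse inclusion, I would argue by contradiction: were there some $x\in\rho$ with $b<x<a$, then since $\rho\subsetneq D$ we may pick any $y\in D\setminus\rho$, and Corollary~\ref{cor:unary-nonincl-for-b<x<a} supplies a binary witness $f\in\Pol[2]{\Ruab}\setminus\Pol{\rho}$, contradicting the hypothesis. Hence $\rho\subs\set{0,\dotsc,b}\cup\set{a,\dotsc,n-1}$, and equality holds. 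To conclude $a-b\geq 2$, note that $a-b\leq 1$ would force $\set{0,\dotsc,b}\cup\set{a,\dotsc,n-1}=D$ (either because $a\leq b$ makes the two intervals overlap and jointly cover $D$, or because $a=b+1$ makes them partition $D$), whence $\rho=D$, contradicting $\rho\subsetneq D$.

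For the \emph{sufficiency} direction, assume $\rho=\set{0,\dotsc,b}\cup\set{a,\dotsc,n-1}$ and $a-b\geq 2$. In particular $a>b$, so Lemma~\ref{lem:Rab-entails-0b-an-1} applies and directly yields $\Pol{\Ruab}\subs\Pol{\rho}$.

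There is no real obstacle here; the statement is a bookkeeping assembly of the preceding three facts, the only delicate point being to verify that the case $a\leq b+1$ indeed collapses the candidate set to all of $D$, so that non-triviality of $\rho$ excludes it.
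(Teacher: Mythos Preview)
Your proof is correct and follows essentially the same approach as the paper's: both directions invoke precisely Lemma~\ref{lem:unary-rels-cond}, Corollary~\ref{cor:unary-nonincl-for-b<x<a}, and Lemma~\ref{lem:Rab-entails-0b-an-1} in the same roles, and the exclusion of \m{a-b\leq 1} via \m{\rho=D} is handled identically. (Minor slip: you announce ``four results'' but list and use three, which is all that is needed.)
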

\begin{proof}
If \m{\rho=\set{0,\dotsc,b}\cup\set{a,\dotsc,n-1}} and \m{a-b\geq 2>0},
then Lemma~\ref{lem:Rab-entails-0b-an-1} implies the inclusion
\m{\Pol{\Ruab}\subs\Pol{\rho}}. Conversely, if we assume this condition,
then Lemma~\ref{lem:unary-rels-cond} entails
\m{\set{0,\dotsc,b}\cup\set{a,\dotsc,n-1}\subs\rho}. If this inclusion were
proper, then there would exist some \m{x\in\rho} such that \m{x\not\leq b} and
\m{x\not\geq a}, \ie\ \m{b<x<a}. Since \m{\rho\subsetneq D},
Corollary~\ref{cor:unary-nonincl-for-b<x<a} yields a contradiction to the
assumption \m{\Pol{\Ruab}\subs\Pol{\rho}}. Therefore, we
have \m{\set{0,\dotsc,b}\cup\set{a,\dotsc,n-1}=\rho}.
Moreover, if \m{a-b\leq 1}, then we would have the full relation \m{\rho=D},
violating our assumption.
\end{proof}

\section{The case of bounded order relations}%
\label{sect:bounded-orders}

A \emph{bounded} (partial) order relation is an order relation having
both, a largest (top) element \m{\top}, and a least (bottom) element
\m{\bot}. If \m{\mathord{\preceq}\subs D^2} is an order relation on \m{D},
considered to be clear from the context, and \m{a,b\in D} are any two
elements, we occasionally use the notation
\m{\interval{a}{b}\defeq\lset{x\in D}{a\preceq x\preceq b}} and call it
the \emph{interval from \m{a} to \m{b}}. Clearly, if \m{a\not\preceq b},
then \m{\interval{a}{b}=\emptyset}.
\par

In the first step we construct binary functions witnessing
non\dash{}inclusions of certain maximal \cclones\xspace in maximal clones
described by non\dash{}trivial binary reflexive relations.
\begin{lemma}\label{lem:a-b-geq2-reflexive}
Assume that \m{a-b\geq 2}. Any operation \m{g\in\Op[2]{D}} satisfying
\m{g\apply{x,y}\leq b} when\-ever \m{y\leq b} and \m{g\apply{x,y}\geq a} for
all \m{\apply{x,y}\in D^2} where \m{y\geq a}, preserves \m{\Ruab}.
\par

Moreover, let \m{\rho\subsetneq D^2} be reflexive,
\m{\apply{x,y}\in\rho\setminus\Delta}, \m{\apply{u,v}\in D^2\setminus\rho},
\m{b<z<a}, and suppose, in addition to the above, that \m{g\apply{x,z}=u}
and \m{g\apply{y,z}=v}. Then we have
\m{g\in\Pol{\Ruab}\setminus\Pol{\rho}}.
\end{lemma}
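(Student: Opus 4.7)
The plan is to treat the two assertions in turn. For the first, I would take arbitrary pairs \m{\apply{x_1,y_1},\apply{x_2,y_2}\in\Ruab} and argue by case distinction on whether \m{y_2\leq b}. If so, the first hypothesis on \m{g} directly yields \m{g\apply{y_1,y_2}\leq b}. Otherwise \m{y_2>b}, and since \m{\apply{x_2,y_2}\in\Ruab} we must have \m{x_2\geq a}; the second hypothesis, applied to the pair \m{\apply{x_1,x_2}} (whose second coordinate is \m{x_2\geq a}), then delivers \m{g\apply{x_1,x_2}\geq a}. In either branch \m{\apply{g\apply{x_1,x_2},g\apply{y_1,y_2}}\in\Ruab}, completing the first half.

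For the ``moreover'' part my plan has two ingredients. First, \m{g\notin\Pol{\rho}} is immediate: reflexivity gives \m{\apply{z,z}\in\rho}, the hypothesis gives \m{\apply{x,y}\in\rho}, and applying \m{g} column\dash{}wise to these two pairs produces \m{\apply{g\apply{x,z},g\apply{y,z}}=\apply{u,v}\notin\rho}. Second, to make the statement non\dash{}vacuous, I would verify that an operation \m{g} meeting the first two conditions \emph{and} the prescriptions \m{g\apply{x,z}=u}, \m{g\apply{y,z}=v} actually exists. The inequality \m{b<z<a} is decisive here: since \m{z} is neither \m{\leq b} nor \m{\geq a}, the values \m{g\apply{x,z}} and \m{g\apply{y,z}} are entirely unconstrained by the first two hypotheses, and prescribing them to be \m{u} and \m{v} is unambiguous because \m{\apply{x,y}\notin\Delta} forces \m{x\neq y}. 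One then extends \m{g} to a total operation by default rules, for example \m{g\apply{x',y'}\defeq 0} whenever \m{y'\leq b} and \m{g\apply{x',y'}\defeq n-1} whenever \m{y'\geq a}, with the standing assumption \m{a-b\geq 2} both guaranteeing the existence of some admissible \m{z} and ensuring that these two default rules never collide.

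I do not anticipate serious difficulty. The one subtlety worth flagging is that the hypothesis on \m{g} controls its values via the \emph{second} argument, so in the ``\m{y_2>b}'' branch one must apply it to \m{\apply{x_1,x_2}} via \m{x_2\geq a} rather than to the original pair \m{\apply{x_2,y_2}}; the remainder of the argument is routine bookkeeping around the free interval of elements strictly between \m{b} and \m{a}.
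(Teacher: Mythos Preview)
Your proof is correct and essentially matches the paper's argument: the paper performs the case split on whether \m{x_2\geq a} rather than on \m{y_2\leq b}, but since \m{\apply{x_2,y_2}\in\Ruab} forces one of these disjuncts, the two splits are interchangeable, and the ``moreover'' part is handled identically. Your additional existence discussion is not strictly required by the lemma as stated---the paper defers that construction to the subsequent corollary---but it is correct and harmless (just remember to also assign a default value on the remaining pairs \m{\apply{x',y'}} with \m{b<y'<a}).
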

\begin{proof}
First, we check that \m{g\in\Pol{\Ruab}}. Namely, if
\m{\apply{x_1,y_1},\apply{x_2,y_2}\in\Ruab} and \m{x_2\geq a}, then
\m{g\apply{x_1,x_2}\geq a}. Otherwise, we have \m{x_2<a} and \m{y_2\leq b},
which implies \m{g\apply{y_1,y_2}\leq b}. In both cases we obtain
\m{\apply{g\apply{x_1,x_2},g\apply{y_1,y_2}}\in\Ruab}.
\par
Furthermore, we have \m{\apply{x,y},\apply{z,z}\in\rho}, but
\m{\apply{g\apply{x,z},g\apply{y,z}}=\apply{u,v}\notin\rho}, proving
\m{g\mathrel{\npres}\rho}.
\end{proof}

If \m{a-b\geq 2}, the many requirements on the binary function in the
previous lemma are actually satisfiable.
\begin{corollary}\label{cor:a-b-geq2-reflexive}
For all \m{a,b\in D} such that \m{a-b\geq 2} and every
non\dash{}trivial binary reflexive relation
\m{\Delta\subsetneq \rho\subsetneq D^2}, we have
\m{\Pol[2]{\Ruab}\not\subs\Pol{\rho}}.
\end{corollary}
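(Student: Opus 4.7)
The plan is to apply Lemma~\ref{lem:a-b-geq2-reflexive} with a concretely constructed binary operation~\m{g}, and the essential task is to verify that all the conditions imposed on~\m{g} in that lemma can be realised simultaneously.

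First I would select the data needed to invoke Lemma~\ref{lem:a-b-geq2-reflexive}. Since \m{\Delta\subsetneq\rho}, there exists a pair \m{\apply{x,y}\in\rho\setminus\Delta}; since \m{\rho\subsetneq D^{2}}, there exists \m{\apply{u,v}\in D^{2}\setminus\rho}; and since \m{a-b\geq 2}, there exists an element \m{z\in D} with \m{b<z<a}.

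Next I would define \m{g\in\Op[2]{D}} piecewise by setting \m{g\apply{x,z}\defeq u}, \m{g\apply{y,z}\defeq v}, \m{g\apply{x',y'}\defeq 0} whenever \m{y'\leq b} and \m{\apply{x',y'}\notin\set{\apply{x,z},\apply{y,z}}}, \m{g\apply{x',y'}\defeq n-1} whenever \m{y'\geq a} and \m{\apply{x',y'}\notin\set{\apply{x,z},\apply{y,z}}}, and \m{g} arbitrary (say \m{0}) on the remaining pairs.

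The key observation that makes this definition consistent is that the second coordinate \m{z} of the two prescribed points satisfies \m{b<z<a}, so \m{z\not\leq b} and \m{z\not\geq a}. Hence the assignments \m{g\apply{x,z}=u} and \m{g\apply{y,z}=v} do not clash with the blanket rules \m{g\apply{x',y'}\leq b} for \m{y'\leq b} and \m{g\apply{x',y'}\geq a} for \m{y'\geq a}. Moreover, \m{\apply{x,z}\neq\apply{y,z}} because \m{x\neq y} (as \m{\apply{x,y}\notin\Delta}), so the two pointwise prescriptions are compatible. Consequently, \m{g} satisfies all hypotheses of Lemma~\ref{lem:a-b-geq2-reflexive}, which immediately gives \m{g\in\Pol[2]{\Ruab}\setminus\Pol{\rho}} and therefore the desired non\dash{}inclusion. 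I do not foresee a real obstacle here; the entire argument is a short consistency check, with the inequality \m{a-b\geq 2} being exactly what is needed to produce the separating element~\m{z}.
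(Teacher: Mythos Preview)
Your proposal is correct and follows essentially the same approach as the paper: choose \m{\apply{x,y}\in\rho\setminus\Delta}, \m{\apply{u,v}\in D^2\setminus\rho}, and construct a binary \m{g} that is \m{0} on the strip \m{y'\leq b}, \m{n-1} on the strip \m{y'\geq a}, and takes the prescribed values \m{u,v} at \m{\apply{x,z},\apply{y,z}} for some \m{b<z<a}, then invoke Lemma~\ref{lem:a-b-geq2-reflexive}. The only cosmetic difference is that the paper defines \m{g} on the whole middle strip \m{b<y'<a} column-wise (value \m{u} when the first coordinate equals \m{x}, else \m{v}), whereas you set only the two needed points and default the rest to \m{0}; either choice works.
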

\begin{proof}
Since \m{a-b\geq 2}, binary functions \m{g} fulfilling the assumptions of
Lemma~\ref{lem:a-b-geq2-reflexive} are indeed constructible. Choosing
pairs \m{\apply{x,y}\in\rho\setminus\Delta} and
\m{\apply{u,v}\in D^2\setminus\rho}, we may, for instance, define
\m{g\apply{w,z}\defeq 0\leq b} for \m{z\leq b},
\m{g\apply{w,z}\defeq n-1\geq a} for \m{z\geq a},
\m{g\apply{w,z}\defeq u} for \m{b<z<a} and \m{w=x}, and
\m{g\apply{w,z}\defeq v} else, \ie\ for all \m{\apply{w,z}\in D^2}
satisfying \m{b<z<a} and \m{w\neq x}. Since \m{y\neq x}, this ensures that
\m{g\apply{y,z}=v} for all \m{b<z<a}, and hence \m{g} fulfils the
conditions of Lemma~\ref{lem:a-b-geq2-reflexive}.
\end{proof}

So the preceding result demonstrates that inclusions
\m{\Pol{\Ruab}\subs\Pol{\rho}} are impossible whenever \m{a-b\geq 2} and
\m{\rho} is a non\dash{}trivial equivalence or a bounded order relation.
In order to exclude more inclusions, we will use the following trivial
observation.
\begin{lemma}\label{lem:unary-f-pres-a-b-pres-Rab}
If for \m{a,b\in D} an operation
\m{f\in\Op[1]{D}} preserves the sets \m{\set{0,\dotsc,b}} and
\m{\set{a,\dotsc,n-1}}, then \m{f\preserves \Ruab}.
In particular this follows, if \m{a\leq b} and \m{f} preserves the sets
\m{\lset{x\in D}{x<a}}, \m{\lset{x\in D}{a\leq x\leq b}} and
\m{\lset{x\in D}{b<x}}.
\end{lemma}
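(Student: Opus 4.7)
The plan is to prove the main claim directly from the definition of the clausal relation and then derive the ``in particular'' part as a straightforward consequence.

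For the main claim, let $(x,y)\in\Ruab$ and apply $f$ component\dash{}wise. By definition of $\Ruab$, either $x\geq a$, i.e.\ $x\in\set{a,\dotsc,n-1}$, or $y\leq b$, i.e.\ $y\in\set{0,\dotsc,b}$. In the first case, the hypothesis that $f$ preserves $\set{a,\dotsc,n-1}$ yields $f\apply{x}\geq a$, so $\apply{f\apply{x},f\apply{y}}\in\Ruab$. In the second case, the hypothesis that $f$ preserves $\set{0,\dotsc,b}$ yields $f\apply{y}\leq b$, again giving $\apply{f\apply{x},f\apply{y}}\in\Ruab$. Thus $f\preserves\Ruab$.

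For the ``in particular'' statement, I would show that when $a\leq b$, preserving the three sets $A\defeq\lset{x\in D}{x<a}$, $M\defeq\lset{x\in D}{a\leq x\leq b}$, $B\defeq\lset{x\in D}{b<x}$ implies preservation of both $\set{0,\dotsc,b}=A\cup M$ and $\set{a,\dotsc,n-1}=M\cup B$. Indeed, for $y\in\set{0,\dotsc,b}$ we have $y\in A$ or $y\in M$; in the first case $f\apply{y}<a\leq b$, in the second $a\leq f\apply{y}\leq b$, and in both $f\apply{y}\leq b$. Dually, for $x\in\set{a,\dotsc,n-1}$, either $x\in M$ giving $f\apply{x}\geq a$, or $x\in B$ giving $f\apply{x}>b\geq a$, so $f\apply{x}\geq a$ in either subcase. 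Now the first part applies.

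There is no real obstacle here; the only subtlety is the use of $a\leq b$ to ensure that the three ``block'' sets fit together correctly into $\set{0,\dotsc,b}$ and $\set{a,\dotsc,n-1}$, and that inequalities like $f\apply{y}<a$ translate to $f\apply{y}\leq b$. The argument is a direct case analysis on the defining disjunction of $\Ruab$.
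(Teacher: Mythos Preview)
Your proof is correct and follows essentially the same approach as the paper: a direct case split on the defining disjunction of \m{\Ruab} for the main claim, and for the ``in particular'' part the observation that, when \m{a\leq b}, the two sets \m{\set{0,\dotsc,b}} and \m{\set{a,\dotsc,n-1}} are unions of pairs of the three ``blocks''. The paper phrases the second step slightly more abstractly---invariant unary relations are closed under unions---whereas you verify the preservation of the two unions by hand, but the content is the same.
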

\begin{proof}
If \m{\apply{x,y}\in\Ruab} and \m{x\geq a}, then \m{f\apply{x}\geq a},
otherwise, \m{x<a} and \m{y\leq b}, whence \m{f\apply{y}\leq b}. In both
cases we have \m{\apply{f\apply{x},f\apply{y}}\in\Ruab}. The additional
remark follows since for \m{a\leq b} the union of the first two mentioned
sets is \m{\set{0,\dotsc,b}}, the union of the last two sets is
\m{\set{a,\dotsc,n-1}}, and invariant relations of unary operations are
closed under arbitrary unions of relations of identical arity.
\end{proof}

We shall use transpositions that preserve the subsets
\m{\set{0,\dotsc,b}} and
\m{\set{a,\dotsc,n\ovflhbx{1.02675pt}-\ovflhbx{1pt}1}} from
Lemma~\ref{lem:unary-f-pres-a-b-pres-Rab} in
Proposition~\ref{prop:bounded-orders} below. However, first, we shall deal
with a few exceptional cases. They are actually variations of one case up
to different dualisations, but we consider them explicitly here.

\begin{lemma}\label{lem:orders-exceptions}
Let \m{n\geq 3}, \m{a,b\in D} and \m{\mathord{\preceq}\subs D^2} be a
bounded order relation with least element \m{\bot} and greatest element
\m{\top}. If
\begin{enumerate}[(a)]
\item\label{item:R11}
      \m{0=\bot<1 = a = b = \top}, or
\item\label{item:12}
      \m{0=\bot<1= a}, \m{n-2=b<n-1=\top}, or
\item\label{item:31}
      \m{n-1=\bot>n-2=b}, \m{1=a>0=\top}, or
\item\label{item:Rn-2n-2}
      \m{n-1=\bot>n-2=b=a=\top}, or
\item\label{item:21}
      \m{a=\bot=b=1>0=\top}, or
\item\label{item:22}
      \m{a=\bot=b=n-2<n-1=\top},
\end{enumerate}
then there exists some
\m{f\in\Pol[1]{\Ruab}\setminus\Pol{\mathord{\preceq}}}, whence
\m{\Pol{\Ruab}\subs\Pol{\mathord{\preceq}}} is impossible.
\end{lemma}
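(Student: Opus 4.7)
The plan is to settle each of the six exceptional cases by producing a unary witness \m{f\in\Pol[1]{\Ruab}\setminus\Pol{\mathord{\preceq}}}. The main tool will be Lemma~\ref{lem:pres-of-max-clausal-rel}: any unary \m{f} with \m{\im\apply{f}\subs\set{0,\dotsc,b}} or \m{\im\apply{f}\subs\set{a,\dotsc,n-1}} automatically belongs to \m{\Pol{\Ruab}}. Accordingly, my uniform strategy is to locate, in each case, two distinct elements \m{u,v\in D} sitting inside one of these two intervals and related by a non\dash{}trivial comparison \m{u\preceq v} of the bounded order, and then to let \m{f} swap \m{u} and \m{v} while being constant with value in \m{\set{u,v}} on \m{D\setminus\set{u,v}}. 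Then \m{\im\apply{f}\subs\set{u,v}} lies inside the chosen interval, so \m{f\preserves\Ruab}, whereas \m{\apply{f\apply{u},f\apply{v}}=\apply{v,u}\notin\mathord{\preceq}} by antisymmetry (since \m{u\neq v} and \m{u\preceq v}), yielding \m{f\notin\Pol{\mathord{\preceq}}}.

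In cases~\eqref{item:R11},~\eqref{item:12},~\eqref{item:31} and~\eqref{item:21} the pair \m{\set{0,1}} lies inside \m{\set{0,\dotsc,b}}, and the specified bounds force either \m{0=\bot\prec 1} (in~\eqref{item:R11},~\eqref{item:12}) or \m{0=\top\succ 1} (in~\eqref{item:31},~\eqref{item:21}); both produce a genuine comparison to be destroyed. I would then take \m{f\apply{0}\defeq 1}, \m{f\apply{1}\defeq 0} and \m{f\apply{y}\defeq 0} for \m{y\in\set{2,\dotsc,n-1}}, the last clause being well\dash{}posed because \m{n\geq 3}. In cases~\eqref{item:Rn-2n-2} and~\eqref{item:22} the situation is dual: \m{\set{n-2,n-1}=\set{a,\dotsc,n-1}} coincides with \m{\set{\bot,\top}}, and a non\dash{}trivial comparison between \m{n-2} and \m{n-1} is directly at hand. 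The corresponding witness is \m{f\apply{n-2}\defeq n-1}, \m{f\apply{n-1}\defeq n-2} and \m{f\apply{y}\defeq n-1} for \m{y\in\set{0,\dotsc,n-3}}.

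The only obstacle I anticipate is purely bookkeeping: for each of the six specialisations of \m{a,b,\bot} and \m{\top} one has to verify that the chosen pair genuinely lies in the indicated interval and that the claimed non\dash{}trivial comparison is forced by the hypotheses. Once that is noted, membership \m{f\in\Pol{\Ruab}} follows immediately from Lemma~\ref{lem:pres-of-max-clausal-rel}, and \m{f\notin\Pol{\mathord{\preceq}}} follows uniformly from antisymmetry, completing the argument.
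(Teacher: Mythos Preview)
Your argument is correct. In each of the six cases the chosen pair \m{\set{0,1}} (respectively \m{\set{n-2,n-1}}) indeed sits inside \m{\set{0,\dotsc,b}} (respectively \m{\set{a,\dotsc,n-1}}), the hypotheses force a strict comparison between the two elements, the swap\dash{}plus\dash{}constant map \m{f} has image contained in the right interval so Lemma~\ref{lem:pres-of-max-clausal-rel} gives \m{f\preserves\Ruab}, and antisymmetry of \m{\preceq} directly yields \m{\apply{f(u),f(v)}=\apply{v,u}\notin\mathord{\preceq}}.

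The paper takes a closely related but slightly different route. Instead of a two\dash{}valued transposition, it uses in each case a near\dash{}identity map that alters exactly one value (for example \m{f(0)\defeq 1}, \m{f(x)\defeq x} otherwise in case~\eqref{item:R11}), arranging that the image equals \m{\Dommenosa{0}} or \m{\Dommenosa{n-1}} and hence fits one of the intervals in Lemma~\ref{lem:pres-of-max-clausal-rel}. The order violation is then witnessed not by swapping a comparable pair but by picking a third element \m{x\notin\set{\bot,\top}} (available since \m{n\geq 3}) and using that \m{\bot} or \m{\top} is moved onto the ``wrong'' side of \m{x}. Your approach is a bit more uniform---one template for all six cases, with the violation coming straight from antisymmetry---whereas the paper's case\dash{}by\dash{}case functions exploit the boundedness of \m{\preceq} via an auxiliary third point. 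Both work; yours is arguably the cleaner packaging.
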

\begin{proof}
In each of the cases we explicitly define a unary operation
\m{f\in\Op[1]{D}} not preserving \m{\mathord{\preceq}}. The condition
\m{f\in\Pol{\Ruab}}
will always follow from Lemma~\ref{lem:pres-of-max-clausal-rel}.

\begin{enumerate}[(a)]
\item Define \m{f\in\Op[1]{D}} by \m{f(0)\defeq 1}, \m{f(x)\defeq x} for
      \m{x\in \Dommenosa{0}}. Since \m{n\geq 3} there exists some
      element \m{x\in D\setminus\set{0,1}}. We have \m{0=\bot \preceq x},
      but the assumption \m{\top = 1=f(0)\preceq f(x)=x} would imply
      the contradiction \m{x=\top=1}, so
      \m{f\mathrel{\npres}\mathord{\preceq}}.
      Besides, \m{\im\apply{f}=\Dommenosa{0}=\set{a,\dotsc,n-1}}, so
      \m{f\preserves \Ruab}.
\item Define \m{f\in\Op[1]{D}} by \m{f(n-1)\defeq 0} and \m{f(x)\defeq x}
      for \m{x\in \Dommenosa{n-1}}. Since
      \m{\im\apply{f}=\Dommenosa{n-1} =\set{0,\dotsc,b}}, we get
      \m{f\preserves\Ruab}. We have \m{1\preceq \top=n-1} and \m{1<n-1}
      due to \m{n\geq 3}, so supposing \m{1=f(1)\preceq f(n-1)=0=\bot}
      would imply the contradiction \m{1=\bot=0}. Hence,
      \m{f\mathrel{\npres}\mathord{\preceq}}.
\item Define \m{f\in\Op[1]{D}} by \m{f(0)\defeq n-1} and \m{f(x)\defeq x}
      for \m{x\in\Dommenosa{0}}. Evidently,
      \m{\im\apply{f}=\Dommenosa{0}=\set{a,\dotsc,n-1}}, so
      \m{f\preserves\Ruab}. We have \m{1\preceq \top=0}, and assuming
      \m{1=f(1)\preceq f(0)=n-1=\bot} would imply \m{1=\bot=n-1},
      \ie\ \m{n=2}.
      Thus, \m{f\mathrel{\npres}\mathord{\preceq}}.
\item Define \m{f\in\Op[1]{D}} by \m{f\apply{n-1}\defeq n-2} and
      \m{f(x)\defeq x} for \m{x\in\Dommenosa{n-1}}. For \m{n\geq 3}, there
      exists some \m{x\in \Dommenosa{n-1,n-2}}. We have
      \m{n-1=\bot\prec x}, but \m{\top=n-2=f(n-1)\preceq f(x)=x}
      would imply
      \m{x=\top=n-2}, whence \m{f\mathrel{\npres}\mathord{\preceq}}.
      Clearly, \m{\im\apply{f}=\Dommenosa{n-1}=\set{0,\dotsc,b}}, thus
      \m{f\in\Pol{\Ruab}}.
\item Define \m{f\in\Op[1]{D}} as in~\eqref{item:R11}; thence, we know
      \m{\im\apply{f}=\Dommenosa{0}=\set{a,\dots,n-1}}, so
      \m{f\preserves\Ruab}.
      Moreover, there is \m{x\in\Dommenosa{0,1}} due to \m{n\geq 3}.
      Thus, \m{x\preceq \top = 0}, but
      \m{x= f(x)\preceq f(0)=1 =\bot} would yield \m{x=\bot=1}, a
      contradiction.
\item Define \m{f\in\Op[1]{D}} as in~\eqref{item:Rn-2n-2}; thence, we
      recall \m{\im\apply{f}=\Dommenosa{n-1}=\set{0,\dotsc,b}}, so
      \m{f\preserves\Ruab}. We have
      \m{0\preceq\top=n-1}. As \m{n\geq 3}, we have \m{0<n-2}, and
      thus assuming \m{0=f(0)\preceq f(n-1)=n-2=\bot} would imply
      \m{0=\bot=n-2}, \ie\ \m{n=2}. Thus,
      \m{f\mathrel{\npres}\mathord{\preceq}}.
\end{enumerate}
\end{proof}

In Corollary~\ref{cor:a-b-geq2-reflexive} we have excluded inclusions
\m{\Pol{\Ruab}\subs\Pol{\mathord{\preceq}}} for bounded orders \m{\preceq},
whenever \m{a,b\in D} satisfy \m{a-b\geq 2}. In the previous lemma, a few
special cases have been considered. Now we deal with the rest using
transpositions fulfilling the criterion from
Lemma~\ref{lem:unary-f-pres-a-b-pres-Rab}.
\begin{proposition}\label{prop:bounded-orders}
Let \m{n\geq 3} and \m{\mathord{\preceq}\subs D^2} be a bounded order
relation on \m{D} with bottom element \m{\bot} and top \m{\top}. There do
not exist parameters \m{a,b\in D} such that
\m{\Pol{\Ruab}\subs\Pol{\mathord{\preceq}}}.
\end{proposition}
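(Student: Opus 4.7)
The plan is to reduce to already\dash{}handled cases and otherwise exploit the uniqueness of \m{\bot} and \m{\top} via suitable transpositions of \m{D}. Suppose for a contradiction that \m{\Pol{\Ruab}\subs\Pol{\mathord{\preceq}}} holds for some \m{a\in\Dommenosa{0}} and \m{b\in\Dommenosa{n-1}}. If \m{a-b\geq 2}, then Corollary~\ref{cor:a-b-geq2-reflexive}, applied to the non\dash{}trivial reflexive binary relation \m{\rho=\mathord{\preceq}}, immediately yields a contradiction. Hence it remains to treat \m{a-b\leq 1}, which splits into the two sub\dash{}cases \m{a=b+1} and \m{a\leq b}.

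In either sub\dash{}case I will produce a transposition of \m{D} which, by Lemma~\ref{lem:unary-f-pres-a-b-pres-Rab}, automatically belongs to \m{\Pol{\Ruab}} but fails to preserve \m{\mathord{\preceq}}. If \m{a=b+1}, Lemma~\ref{lem:unary-f-pres-a-b-pres-Rab} shows that every permutation stabilising both blocks \m{\set{0,\dotsc,b}} and \m{\set{a,\dotsc,n-1}} lies in \m{\Pol{\Ruab}}; for \m{a\leq b}, the same holds for permutations stabilising each of the three blocks \m{\set{0,\dotsc,a-1}}, \m{\interval{a}{b}} and \m{\set{b+1,\dotsc,n-1}}. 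The crucial observation is that any transposition \m{\tau=(\bot\ x)} with \m{x\neq\bot} violates \m{\preceq}: from \m{\bot\preceq x} preservation would force \m{x=\tau(\bot)\preceq\tau(x)=\bot}, contradicting the uniqueness of the minimum. Dually, \m{\tau=(\top\ x)} with \m{x\neq\top} does not preserve \m{\preceq} either.

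Consequently, whenever the partition block containing \m{\bot} or the one containing \m{\top} has cardinality at least two, I may pick a witnessing transposition inside it, which gives the desired contradiction. In the two\dash{}block case \m{a=b+1}, the blocks have sizes summing to \m{n\geq 3}, so at least one block has \m{\geq 2} elements, and a brief inspection of where \m{\bot} and \m{\top} lie always produces an admissible transposition. The only genuine obstruction is therefore the three\dash{}block case \m{a\leq b} in which the blocks containing \m{\bot} and \m{\top} are both singletons, and verifying that this matches precisely the exceptions already handled is the step I expect to require the most bookkeeping.

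To conclude, one enumerates these degenerate configurations and compares them with Lemma~\ref{lem:orders-exceptions}. A singleton \m{\set{0,\dotsc,a-1}} forces \m{a=1} and its unique element is \m{0}; a singleton \m{\interval{a}{b}} forces \m{a=b}; a singleton \m{\set{b+1,\dotsc,n-1}} forces \m{b=n-2} and its element is \m{n-1}. Running through the six ways of placing the distinct elements \m{\bot} and \m{\top} into two distinct singleton blocks of the three and translating the arising parameter constraints, one sees that the resulting constellations match exactly items~\eqref{item:R11}, \eqref{item:12}, \eqref{item:31}, \eqref{item:Rn-2n-2}, \eqref{item:21} and~\eqref{item:22} of Lemma~\ref{lem:orders-exceptions}. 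In each such exceptional case that lemma already supplies an \m{f\in\Pol[1]{\Ruab}\setminus\Pol{\mathord{\preceq}}}, contradicting the standing assumption and finishing the proof.
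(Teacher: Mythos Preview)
Your proposal is correct and follows essentially the same strategy as the paper: invoke Corollary~\ref{cor:a-b-geq2-reflexive} for \m{a-b\geq 2}, and for \m{a-b\leq 1} use transpositions \m{(\bot\ x)} or \m{(\top\ x)} inside a single block (justified by Lemma~\ref{lem:unary-f-pres-a-b-pres-Rab}), falling back to Lemma~\ref{lem:orders-exceptions} precisely when no such transposition exists. Your case split by ``two blocks versus three blocks'' and the explicit formulation of the obstruction as ``the blocks containing \m{\bot} and \m{\top} are both singletons'' is a slightly cleaner reorganisation of the paper's case split by the position of \m{\bot}, but the ingredients and the six exceptional configurations matched to Lemma~\ref{lem:orders-exceptions} are identical.
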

\begin{proof}
Corollary~\ref{cor:a-b-geq2-reflexive} excludes inclusions for
\m{a-b\geq 2}.
For the remainder of the proof let us suppose \m{a-b\leq 1}, \ie\
\m{a\leq b+1}. We shall exhibit unary operations (mostly transpositions)
that obviously do not preserve \m{\mathord{\preceq}}, but preserve
\m{\Ruab} (usually due to Lemma~\ref{lem:unary-f-pres-a-b-pres-Rab}).
For this we distinguish three cases regarding \m{\bot}.
First assume \m{\bot < a}. If there
exists \m{x<a} such that \m{x\neq \bot}, then we use the transposition
\m{\apply{x,\bot}}. Else all \m{x<a} satisfy \m{x=\bot}, \ie\
\m{\bot=0<a=1}. In this case we have \m{\top\neq \bot = 0},
so \m{\top\geq 1=a}. First consider the situation that \m{\top\leq b}.
If there exists some \m{x\in \interval{a}{b}\setminus\set{\top}}, we use the
transposition \m{\apply{x,\top}}. Otherwise,
\m{\interval{a}{b}\subs\set{\top}}, thus \m{1=a=\top=b} and \m{0=\bot}, which
is handled by Lemma~\ref{lem:orders-exceptions}\eqref{item:R11}. The
complementary case is that \m{\top>b}. If there exists \m{x>b} such that
\m{x\neq \top}, then we can use \m{\apply{x,\top}}, else every \m{x>b}
equals \m{\top}, and so we have \m{\top=n-1>b=n-2} together with
\m{a=1>0=\bot}. This is dealt with in
Lemma~\ref{lem:orders-exceptions}\eqref{item:12}.
\par
The second main case is when \m{a\leq \bot \leq b}. If there is some
\m{a\leq x\leq b} such that \m{x\neq \bot}, then we use
\m{\apply{x,\bot}}. Otherwise, \m{\interval{a}{b}\subs\set{\bot}}, and so
\m{a=\bot =b}. Due to \m{n\geq 3}, we have again \m{\top\neq\bot = a=b}.
Let us consider the situation \m{\top<a}. If there exists some \m{x<a},
\m{x\neq \top}, then we may use \m{\apply{x,\top}}, else every \m{x<a}
equals \m{\top}, so \m{\top=0<a=1=b=\bot}. This possibility is treated in
Lemma~\ref{lem:orders-exceptions}\eqref{item:21}. The opposite situation
is that \m{\top> a = b}. If there exists some \m{x>b}, \m{x\neq\top}, then
we use \m{\apply{x,\top}}, otherwise every \m{x>b} equals \m{\top}, and so
\m{\top=n-1>b=n-2=a=\bot}, which is solved in case~\eqref{item:22} of
Lemma~\ref{lem:orders-exceptions}.
\par
Third, let us deal with the possibility that \m{\bot>b}. If there is
some \m{x>b}, \m{x\neq\bot}, then we can use the transposition
\m{\apply{x,\bot}}. Otherwise, every \m{x>b} equals~\m{\bot}, so
\m{\bot=n-1>b=n-2}. Due to \m{n\geq 3}, we have \m{\top\neq \bot =n-1},
\ie\ \m{\top\leq n-2= b}. The first subcase is that \m{\top <a}. If there
exists some \m{x<a}, \m{x\neq \top}, we use the transposition
\m{\apply{x,\top}}. Else, all \m{x<a} satisfy \m{x=\top}, so
we obtain \m{\top=0<a=1}, \m{b=n-2<\bot =n-1}, which is treated in
Lemma~\ref{lem:orders-exceptions}\eqref{item:31}. The remaining subcase is
that \m{a\leq \top\leq b}. If there exists some \m{a\leq x\leq b},
\m{x\neq \top}, we use again \m{\apply{x,\top}}, else
\m{\interval{a}{b}\subs\set{\top}}, so \m{a=\top=b=n-2<n-1=\bot}, which has
been dealt with in Lemma~\ref{lem:orders-exceptions}\eqref{item:Rn-2n-2}.
\par
So in the case that \m{a-b\leq 1}, we have always found a transposition or a
unary operation as constructed in Lemma~\ref{lem:orders-exceptions} that
preserves \m{\Ruab}, but does not preserve the order \m{\preceq}.
Therefore, we have \m{\Pol{\Ruab}\not\subseteq\Pol{\mathord{\preceq}}}.
\end{proof}

\section{The case of \texorpdfstring{non\dash{}trivial}{non-trivial}
         equivalence relations}%
\label{sect:nontriv-equivalences}
Throughout this section, we shall employ the notation
\m{\Eq{D}} for the set of all equivalence relations on \m{D}.
It is our aim to show that maximal \cclones\xspace \m{\Pol{\Ruab}} are
contained in a maximal clone given by a non\dash{}trivial equivalence
relation if and only if \m{a=b+1}. In this case the equivalence relation
is uniquely determined.
\par

As our first result, we provide a simple sufficient condition for an
inclusion in a maximal clone described by an equivalence relation.
\begin{lemma}\label{lem:the-only-equivalence-rel}
Let \m{a,b\in D} satisfy \m{a=b+1} and \m{\theta\in \Eq{D}} be the
equivalence relation on \m{D} having the partition
\m{\factorBy{D}{\theta}=\set{\set{0,\dotsc,b},\set{a,\dotsc,n-1}}}. Then
we have
\m{\theta=\Ruab\cap\apply{\Ruab}^{-1}\in\genRelClone{\Ruab}}, and so the
inclusion \m{\Pol{\Ruab}\subs\Pol{\theta}} holds.
\end{lemma}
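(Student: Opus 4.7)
The plan is to derive this almost immediately from Lemma~\ref{lem:Rab-entails-0b-an-1}. Since \m{a=b+1} in particular satisfies \m{a>b}, that lemma applies and yields
\m{\Ruab\cap\apply{\Ruab}^{-1}=\set{0,\dotsc,b}^2\cup\set{a,\dotsc,n-1}^2}.
Under the assumption \m{a=b+1}, the two blocks \m{\set{0,\dotsc,b}} and \m{\set{a,\dotsc,n-1}} are disjoint and exhaust \m{D}, hence they form exactly the partition associated with \m{\theta}. Therefore the above union of Cartesian squares coincides with \m{\theta}, which establishes the first equality of the lemma.

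For the second assertion I would observe that the identity \m{\theta=\Ruab\cap\apply{\Ruab}^{-1}} is itself a primitive positive definition of \m{\theta} over \m{\Ruab}, involving only a conjunction of the two atomic formul� \m{\Ruab\apply{x,y}} and \m{\Ruab\apply{y,x}} (the latter being obtained by swapping variables). Since primitive positive definability is what generates the relational clone, this immediately gives \m{\theta\in\genRelClone{\Ruab}}.

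The inclusion \m{\Pol{\Ruab}\subs\Pol{\theta}} then follows from the standard \name{Galois} theory: using \m{\genRelClone{\Ruab}=\Inv{\Pol{\Ruab}}}, membership \m{\theta\in\genRelClone{\Ruab}} means precisely that every polymorphism of \m{\Ruab} preserves \m{\theta}. I do not anticipate any genuine obstacle; the statement is essentially a corollary of Lemma~\ref{lem:Rab-entails-0b-an-1}, with the extra ingredient being only the combinatorial remark that, when \m{a} and \m{b} are consecutive, the ``upper'' and ``lower'' intervals \m{\set{a,\dotsc,n-1}} and \m{\set{0,\dotsc,b}} partition \m{D} exactly, so that the reflexive\dash{}symmetric relation supplied by that lemma is automatically an equivalence relation.
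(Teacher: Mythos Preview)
Your proposal is correct and follows essentially the same route as the paper: the paper's proof likewise reduces everything to Lemma~\ref{lem:Rab-entails-0b-an-1}, observing that \m{\theta=\set{0,\dotsc,b}^2\cup\set{a,\dotsc,n-1}^2=\Ruab\cap\apply{\Ruab}^{-1}} and citing that lemma for the last equality. Your write\dash{}up is slightly more explicit about why the intersection lies in \m{\genRelClone{\Ruab}} and why this yields the \m{\Pol}\dash{}inclusion, but the underlying argument is identical.
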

\begin{proof}
For any \m{\apply{x,y}\in D^2} we have \m{\apply{x,y}\in\theta} if and
only if \m{x,y\leq b} or \m{x,y\geq a}, \ie\ exactly if
\m{\apply{x,y}\in\set{0,\dotsc,b}^2\cup\set{a,\dotsc,n-1}^2
   =\Ruab\cap\apply{\Ruab}^{-1}}\ovflhbx{0.80713pt}
(cp.\ Lemma~\ref{lem:Rab-entails-0b-an-1}).
\end{proof}

In the remainder of this section we will prove that the situation
described in Lemma~\ref{lem:the-only-equivalence-rel} is the only one,
where a maximal \cclone\xspace can be contained in a maximal clone given
by a non\dash{}trivial equivalence relation.
\par
As a first step, we establish a few necessary conditions.
\begin{lemma}\label{lem:necessary-conds-for-equivalences}
Let \m{a,b\in D} and \m{\theta\in\Eq{D}\setminus\set{\Delta,\nabla}} be a
non\dash{}trivial equivalence relation such that
\m{\Pol{\Ruab}\subs\Pol{\theta}}. Then the following conditions are
fulfilled:
\begin{enumerate}[(a)]
\item\label{item:a-leq-b}
      \m{0<a\leq b+1\leq n-1}.
\item\label{item:xy-not-theta-implies-singletons}
      For every set \m{I\in
      \set{\set{0,\dotsc,a-1},\set{a,\dotsc,b},\set{b+1,\dotsc,n-1}}} we
      have
      \begin{equation*}
        \forall x,y\in I\colon \apply{x,y}\notin\theta \implies
        \abs{\fapply{x}_{\theta}} = 1 = \abs{\fapply{y}_{\theta}}.
      \end{equation*}
\item\label{item:xyz}
      For all \m{x,y,z\in D} where
      \m{\apply{x,y}\in\theta\setminus\Delta},
      we have the implication
      \begin{equation*}
      \apply{x,z\geq a \,\vee\, x,z\leq b\,\vee\, y,z\geq a
             \,\vee\, y,z\leq b}
      \implies \apply{x,z}\in\theta.
      \end{equation*}
\item\label{item:x-leq-b-y-geq-a-implies-x=y}
      \m{\forall x\leq b\,\forall y\geq a\colon \apply{x,y}\in\theta
         \implies b\geq x= y \geq a}.
\item\label{item:middle-singletons}
      \m{\forall a\leq x\leq b\colon \fapply{x}_{\theta}=\set{x}}.
\item\label{item:x-less-a-theta-class}
      \m{\forall x<a\colon \fapply{x}_{\theta}\subs\set{0,\dotsc,a-1}}.
\item\label{item:y-greater-b-theta-class}
      \m{\forall y>b\colon \fapply{y}_{\theta}\subs\set{b+1,\dotsc,n-1}}.
\item\label{item:impl-0-class-not-big}
      If\/ \m{\fapply{0}_{\theta}\neq\set{0,\dotsc,a-1}}, then we have
      \m{a-1>0}, \m{b+1<n-1}, \m{\fapply{x}_{\theta}=\set{x}} for all
      \m{x\leq b}, and
      \m{\fapply{n-1}_{\theta}=\set{b+1,\dotsc,n-1}}.
\item If\/ $\fapply{n-1}_{\theta}\neq\set{b+1,\dotsc,n-1}$, then we have
      $a-1>0$, $b+1<n-1$, \m{\fapply{y}_{\theta}=\set{y}} for all
      \m{y\geq a}, and
      \m{\fapply{0}_{\theta}=\set{0,\dotsc,a-1}}.
\end{enumerate}
\end{lemma}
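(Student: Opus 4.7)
The plan is to prove items~\eqref{item:a-leq-b}--(i) in sequence, bootstrapping later statements from earlier ones and witnessing every failure of inclusion by a unary function whose image is forced into \m{\set{0,\dotsc,b}} or \m{\set{a,\dotsc,n-1}}, so that membership in \m{\Pol{\Ruab}} is immediate from Lemma~\ref{lem:pres-of-max-clausal-rel}. For~\eqref{item:a-leq-b}, the bounds \m{0<a} and \m{b<n-1} are part of the parametrisation of maximal \cclones{} given by Theorem~\ref{teo_caracterization_cclonemax}, while \m{a\leq b+1} follows because \m{\theta} is a non\dash{}trivial, binary, reflexive relation, so \m{a-b\geq 2} would contradict Corollary~\ref{cor:a-b-geq2-reflexive}. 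For~\eqref{item:xy-not-theta-implies-singletons} and~\eqref{item:xyz} I use the same template: given either \m{x,y\in I} with \m{\apply{x,y}\notin\theta} in~\eqref{item:xy-not-theta-implies-singletons} (with an auxiliary \m{z\in\fapply{x}_{\theta}\setminus\set{x}}), or a fixed pair \m{\apply{x,y}\in\theta\setminus\Delta} and a test element \m{z} in~\eqref{item:xyz}, define a unary \m{f} by \m{f\apply{x}\defeq x}, \m{f\apply{z}\defeq y} and \m{f\apply{w}\defeq c} otherwise, where \m{c\defeq 0} if the relevant range lies in \m{\set{0,\dotsc,b}} and \m{c\defeq n-1} if it lies in \m{\set{a,\dotsc,n-1}}. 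Thanks to~\eqref{item:a-leq-b}, each of the three intervals in~\eqref{item:xy-not-theta-implies-singletons} is contained in one of these two half\dash{}spaces, hence so is \m{\im f}; therefore \m{f\preserves\Ruab} by Lemma~\ref{lem:pres-of-max-clausal-rel}, but \m{\apply{f\apply{x},f\apply{z}}=\apply{x,y}\notin\theta} yields \m{f\notin\Pol{\theta}}, the desired contradiction. Swapping the roles of \m{x} and \m{y} via symmetry of \m{\theta} covers the two remaining disjuncts of~\eqref{item:xyz} and the statement about \m{\fapply{y}_{\theta}} in~\eqref{item:xy-not-theta-implies-singletons}.

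Items~\eqref{item:x-leq-b-y-geq-a-implies-x=y}--\eqref{item:y-greater-b-theta-class} will then follow from~\eqref{item:xyz} by saturation. For~\eqref{item:x-leq-b-y-geq-a-implies-x=y}, suppose \m{\apply{x,y}\in\theta\setminus\Delta} with \m{x\leq b} and \m{y\geq a}; applying~\eqref{item:xyz} with every \m{z\leq b} forces \m{\set{0,\dotsc,b}\subs\fapply{x}_{\theta}}, while applying it with every \m{z\geq a} forces \m{\set{a,\dotsc,n-1}\subs\fapply{x}_{\theta}}, so by~\eqref{item:a-leq-b} the union is all of \m{D}, contradicting \m{\theta\neq\nabla}. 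Items~\eqref{item:middle-singletons}, \eqref{item:x-less-a-theta-class} and~\eqref{item:y-greater-b-theta-class} drop out of~\eqref{item:x-leq-b-y-geq-a-implies-x=y} by asking on which side of the middle band \m{\interval{a}{b}} a putative additional class member of \m{x} would lie and invoking the symmetry of \m{\theta}.

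For~\eqref{item:impl-0-class-not-big}, the hypothesis together with~\eqref{item:x-less-a-theta-class} produces some \m{x\in\set{0,\dotsc,a-1}} with \m{\apply{0,x}\notin\theta}; this already excludes \m{a=1} and, via~\eqref{item:xy-not-theta-implies-singletons} applied inside \m{I=\set{0,\dotsc,a-1}}, yields \m{\fapply{0}_{\theta}=\set{0}}. Iterating~\eqref{item:xy-not-theta-implies-singletons} over the remaining \m{x'\in\set{1,\dotsc,a-1}} and combining with~\eqref{item:middle-singletons} makes every \m{x\leq b} a \m{\theta}\dash{}singleton. Were \m{b+1=n-1}, then~\eqref{item:y-greater-b-theta-class} would collapse \m{\fapply{n-1}_{\theta}} to \m{\set{n-1}}, forcing \m{\theta=\Delta}; hence \m{b+1<n-1}. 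The same saturation argument applied inside \m{\set{b+1,\dotsc,n-1}} then rules out \m{\fapply{n-1}_{\theta}\subsetneq\set{b+1,\dotsc,n-1}}, completing~\eqref{item:impl-0-class-not-big}; item~(i) is fully dual. The only real technical obstacle is arranging the two prescribed values of \m{f} so that \m{\im f} lies entirely in \m{\set{0,\dotsc,b}} or entirely in \m{\set{a,\dotsc,n-1}} without straddling the gap between them; this is precisely what the bound~\eqref{item:a-leq-b} makes possible, after which each subsequent step is a short syllogism in the equivalence~\m{\theta}.
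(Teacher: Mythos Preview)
Your outline follows the paper's approach closely for~\eqref{item:x-leq-b-y-geq-a-implies-x=y}--(i), and your witness for~\eqref{item:xy-not-theta-implies-singletons} is a legitimate variant of the paper's (which uses the near\dash{}identity \m{x\mapsto y}, \m{u\mapsto u} else, together with Lemma~\ref{lem:unary-f-pres-a-b-pres-Rab}). But the shared template fails for~\eqref{item:xyz}. With \m{f\apply{x}=x}, \m{f\apply{z}=y}, \m{f\apply{w}=c}, the image is \m{\set{x,y,c}}; the hypothesis of~\eqref{item:xyz} only places \m{\set{x,z}} (or \m{\set{y,z}}) in one half\dash{}space, not \m{\set{x,y}}, so Lemma~\ref{lem:pres-of-max-clausal-rel} need not apply. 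Concretely, with \m{n=6}, \m{a=2}, \m{b=3}, \m{x=0}, \m{y=5}, \m{z=1} one has \m{x,z\leq b}, yet \m{\set{0,5}} lies in neither \m{\set{0,\dotsc,b}} nor \m{\set{a,\dotsc,n-1}}, and one checks directly that this \m{f\mathrel{\npres}\Ruab}. Worse, your concluding clause ``\m{\apply{f\apply{x},f\apply{z}}=\apply{x,y}\notin\theta}'' is simply false here, since \m{\apply{x,y}\in\theta} is precisely the hypothesis of~\eqref{item:xyz}; no contradiction arises.

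The repair is to swap \m{y} and \m{z}: define \m{f\apply{x}\defeq x}, \m{f\apply{y}\defeq z}, \m{f\apply{w}\defeq c}, so that \m{\im\apply{f}=\set{x,z,c}} does lie in one half\dash{}space by the disjunct on \m{x,z}, whence \m{f\in\Pol{\Ruab}\subs\Pol{\theta}}, and applying \m{f} to \m{\apply{x,y}\in\theta} yields \m{\apply{x,z}\in\theta} directly --- no contradiction needed. (The paper uses the still simpler \m{f} with image \m{\set{w,z}}, \m{w\in\set{x,y}}, and argues the same way.) A smaller point: in~\eqref{item:a-leq-b} you cannot invoke Theorem~\ref{teo_caracterization_cclonemax}, since the lemma only assumes \m{a,b\in D}; the paper instead observes that \m{a=0} or \m{b=n-1} gives \m{\Ruab=D^2} and hence \m{\Pol{\Ruab}=\Op{D}\not\subs\Pol{\theta}}.
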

\begin{proof}
\begin{enumerate}[(a)]
\item If \m{a=0}, or \m{b>n-2}, \ie\ \m{b=n-1}, then we would have a
      trivial clausal relation \m{\Ruab=D^2}, and so
      \m{\Pol{\Ruab}=\Op{D}} would make the inclusion
      \m{\Pol{\Ruab}\subs\Pol{\theta}} impossible. Moreover, if we had
      \m{a-b>1}, then Corollary~\ref{cor:a-b-geq2-reflexive} would imply
      the contradiction \m{\Pol{\Ruab}\not\subs\Pol{\theta}}. Therefore,
      it follows \m{0\neq a \leq b+1\leq n-1}.
\item Suppose, for a contradiction, that there exists a set
      \begin{equation*}
      I\in S\defeq
          \set{\set{0,\dotsc,a-1},\set{a,\dotsc,b},\set{b+1,\dotsc,n-1}}
      \end{equation*}
      and \m{x,y\in I} such that the stated implication fails. So we have
      \m{\apply{x,y}\notin\theta}, and since this assumption is symmetric,
      no generality is lost in assuming that
      \m{\abs{\fapply{x}_{\theta}}>1}. Let
      \m{z\in\fapply{x}_{\theta}\setminus\set{x}}, and define
      \m{f\in\Op[1]{D}} by \m{f\apply{x}\defeq y} and \m{f\apply{u}=u} for
      \m{u\neq x}. Obviously, \m{\apply{z,x}\in\theta}, but
      \m{\apply{f\apply{z},f\apply{x}}=\apply{z,y}\notin\theta}, as
      otherwise \m{\apply{x,z}\in\theta} and transitivity would imply
      \m{\apply{x,y}\in\theta}. Thus, \m{f\mathrel{\npres}\theta}.
      Moreover, as
      \m{x,y\in I}, we have \m{f\in\Pol{S}}, which implies that
      \m{f\preserves\Ruab} by Lemma~\ref{lem:unary-f-pres-a-b-pres-Rab}
      and statement~\eqref{item:a-leq-b}. This proves
      \m{f\in\Pol{\Ruab}\setminus\Pol{\theta}} in contradiction to
      \m{\Pol{\Ruab}\subs\Pol{\theta}}, so our initial assumption was
      false. Hence the claim holds.
\item Let \m{x,y,z\in D} where \m{\apply{x,y}\in\theta} and \m{x\neq y}.
      Moreover, the assumption of the implication is that we can find
      \m{w\in\set{x,y}} such that \m{w,z\geq a} or \m{w,z\leq b}. We
      define \m{f\in\Op[1]{D}} by \m{f\apply{w}\defeq w} and
      \m{f\apply{u}\defeq z} for \m{u\neq w}. Clearly, we have
      \m{\im\apply{f}= \set{w,z}}, so
      \m{\im\apply{f}\subs\set{a,\dotsc,n-1}} or
      \m{\im\apply{f}\subs\set{0,\dotsc,b}}. This implies
      \m{f\in\Pol{\Ruab}\subs\Pol{\theta}} by
      Lemma~\ref{lem:pres-of-max-clausal-rel} and the assumption of this
      lemma. So we get \m{\apply{f\apply{x},f\apply{y}}\in\theta} from
      \m{\apply{x,y}\in\theta}. If \m{w=x}, this means
      \m{\apply{x,z}\in\theta}. Else, if \m{w=y}, we obtain
      \m{\apply{z,y}\in\theta}, which together with
      \m{\apply{x,y}\in\theta} yields \m{\apply{x,z}\in\theta}.
\item Let us assume, for a contradiction, that there exists \m{x\leq b}
      and \m{y\geq a}, where the stated implication fails, \ie\ where
      \m{\apply{x,y}\in\theta}, but \m{x\neq y}. Now for every
      \m{z\geq a}, statement~\eqref{item:xyz} implies
      \m{\apply{x,z}\in\theta}, so
      \m{\set{a,\dotsc,n-1}\subs\fapply{x}_{\theta}}. Any other
      element \m{z\in D} satisfies \m{z<a\leq b+1} by
      item~\eqref{item:a-leq-b}, \ie\ \m{z\leq b}. Then again
      statement~\eqref{item:xyz} implies \m{\apply{x,z}\in\theta}. In
      conclusion, we have \m{D\subs \fapply{x}_{\theta}}, which means
      \m{\theta=\nabla}. As this was excluded beforehand, the claim
      holds.
\item Let us consider any \m{x\in D} where \m{a\leq x\leq b}. For
      \m{y\in\fapply{x}_{\theta}} such that \m{y\geq a}, we get \m{y=x} by
      item~\eqref{item:x-leq-b-y-geq-a-implies-x=y}. Any other
      \m{y\in\fapply{x}_{\theta}} satisfies \m{y<a\leq b+1}
      by~\eqref{item:a-leq-b}, \ie\ \m{y\leq b}. Again,
      statement~\eqref{item:x-leq-b-y-geq-a-implies-x=y}, with roles of
      \m{x} and \m{y} interchanged, yields \m{y=x}.
\item Let \m{x< a\leq b+1} (by~\eqref{item:a-leq-b}), then \m{x\leq b}. If
      there existed some \m{y\in\fapply{x}_{\theta}} such that
      \m{y\geq a}, then statement~\eqref{item:x-leq-b-y-geq-a-implies-x=y}
      would imply \m{a>x=y\geq a}. This contradiction proves
      \m{\fapply{x}_{\theta}\subs\set{0,\dotsc,a-1}}.
\item The proof is dual to that of
      statement~\eqref{item:x-less-a-theta-class}, using
      again~\eqref{item:a-leq-b}
      and~\eqref{item:x-leq-b-y-geq-a-implies-x=y}.
\item Assume \m{\fapply{0}_{\theta}\neq\set{0,\dotsc,a-1}}.
      Since~\eqref{item:a-leq-b} and~\eqref{item:x-less-a-theta-class}
      imply \m{\fapply{0}_{\theta}\subs\set{0,\dotsc,a-1}}, there must
      exist some \m{x<a} such that \m{x\notin\fapply{0}_{\theta}}. In
      particular, \m{x\neq 0} holds, so \m{0<x\leq a-1} yields \m{0<a-1}.
      Since \m{\apply{x,0}\notin\theta}, we get
      \m{\abs{\fapply{0}_{\theta}}=1}
      from~\eqref{item:xy-not-theta-implies-singletons}. So every
      \m{0<z<a} satisfies \m{\apply{0,z}\notin\theta},
      whence~\eqref{item:xy-not-theta-implies-singletons} yields
      \m{\abs{\fapply{z}_{\theta}}=1}. Together with
      statement~\eqref{item:middle-singletons} we can infer
      \m{\fapply{z}_{\theta} = \set{z}} for all \m{z\leq b}. Since
      \m{\theta\neq\Delta} by assumption, we cannot only have singleton
      equivalence classes for all other \m{y>b}. Thus, there must be some
      \m{y>b} where \m{\abs{\fapply{y}_{\theta}}>1}. If there were also
      some \m{z>b} such that \m{\apply{z,y}\notin\theta}, then
      again~\eqref{item:xy-not-theta-implies-singletons} would imply the
      contradiction \m{\abs{\fapply{y}_{\theta}}=1}. Hence, for all
      \m{z>b} we have \m{z\in\fapply{y}_{\theta}}, \ie\
      \m{\set{b+1,\dotsc,n-1}\subs\fapply{y}_{\theta}
                             \subs\set{b+1,\dotsc,n-1}}
      by~\eqref{item:y-greater-b-theta-class}. This means
      \m{\fapply{y}_{\theta}=\set{b+1,\dotsc,n-1}=\fapply{n-1}_{\theta}},
      and since \m{\abs{\fapply{y}_{\theta}}\geq 2}, we also get
      \m{b+1<n-1}.
\item The proof of this statement works dually to the preceding one.
\end{enumerate}
\end{proof}

We have gathered now enough prerequisites to prove the following result.
\begin{proposition}\label{prop:char-nontriv-eq}
Let \m{a,b\in D} and \m{\theta\in\Eq{D}\setminus\set{\Delta,\nabla}} be a
non\dash{}trivial equivalence relation. Then we have
\[
\Pol{\Ruab}\subs\Pol{\theta} \iff
a = b+1 \text{ and }
\factorBy{D}{\theta}=\set{\set{0,\dotsc,b},\set{a,\dotsc,n-1}}.
\]
\end{proposition}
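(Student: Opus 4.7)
The ``$\Leftarrow$'' direction follows immediately from Lemma~\ref{lem:the-only-equivalence-rel}, so the work lies in proving ``$\Rightarrow$''. Assuming $\Pol{\Ruab}\subs\Pol{\theta}$, I plan to apply Lemma~\ref{lem:necessary-conds-for-equivalences} repeatedly, combined with witness constructions based on Lemmas~\ref{lem:pres-of-max-clausal-rel} and~\ref{lem:unary-f-pres-a-b-pres-Rab}. From part~(a) of Lemma~\ref{lem:necessary-conds-for-equivalences} we already know $0 < a \leq b+1 \leq n-1$, so the proof naturally splits into first forcing $a = b+1$ and then pinning down the exact partition.

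To rule out $a \leq b$, I would assume the opposite and exploit parts~(e), (f), (g) of Lemma~\ref{lem:necessary-conds-for-equivalences}, which force every element of the (non-empty) interval $\set{a,\dotsc,b}$ to be a $\theta$-singleton and confine every class of some $x<a$ (resp.\ $y>b$) to $\set{0,\dotsc,a-1}$ (resp.\ $\set{b+1,\dotsc,n-1}$). Since $\theta\neq\Delta$, at least one non-trivial class exists in one of the two outer blocks; by symmetry I pick $(x,y)\in\theta\setminus\Delta$ with $x,y<a$. Defining $f\in\Op[1]{D}$ by $f(x)\defeq a$ and $f(u)\defeq u$ for $u\neq x$, I observe that $f$ preserves both $\set{0,\dotsc,b}$ and $\set{a,\dotsc,n-1}$ (the former because $x\leq b$ is sent to $a\leq b$, the latter because all elements of $\set{a,\dotsc,n-1}$ are fixed), so $f\in\Pol{\Ruab}$ by Lemma~\ref{lem:unary-f-pres-a-b-pres-Rab}; however $(f(x),f(y))=(a,y)$ violates $\theta$, since $\fapply{a}_\theta=\set{a}$ by~(e) and $y\neq a$. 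This contradiction forces $a=b+1$.

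With $a=b+1$, parts~(h) and~(i) of Lemma~\ref{lem:necessary-conds-for-equivalences} leave three structural options: either both $\fapply{0}_\theta=\set{0,\dotsc,b}$ and $\fapply{n-1}_\theta=\set{a,\dotsc,n-1}$ (in which case (f) and (g) immediately force the prescribed partition), or $\fapply{0}_\theta=\set{0,\dotsc,b}$ with every $y\geq a$ a singleton class, or the dual situation. For the second option, $\theta\neq\Delta$ forces $b\geq 1$, and if additionally $a\leq n-2$, I would use the witness $f$ defined by $f(0)\defeq a$, $f(u)\defeq a+1$ for $1\leq u\leq b$, and $f(u)\defeq u$ for $u\geq a$: its image lies inside $\set{a,\dotsc,n-1}$, whence $f\in\Pol{\Ruab}$ by Lemma~\ref{lem:pres-of-max-clausal-rel}, yet $(0,1)\in\theta$ maps to $(a,a+1)\notin\theta$, a contradiction. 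If instead $a=n-1$, then the alleged alternative partition $\set{\set{0,\dotsc,b},\set{n-1}}$ already agrees with the prescribed one. The dual option is handled symmetrically. The main subtlety throughout is the boundary bookkeeping: whenever the witness construction threatens to degenerate (at $a\in\set{1,n-1}$ or $b\in\set{0,n-2}$), the purportedly ``bad'' partition has already collapsed to the prescribed one, so no witness is needed in those corner cases.
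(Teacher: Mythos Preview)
Your proof is correct and follows a genuinely different route from the paper's. The paper first constructs a \emph{binary} witness (via Lemma~\ref{lem:binary-f-pres-Rab}) and uses it together with Lemma~\ref{lem:necessary-conds-for-equivalences}\eqref{item:impl-0-class-not-big} to establish \m{\fapply{0}_{\theta}=\set{0,\dotsc,a-1}} and, dually, \m{\fapply{n-1}_{\theta}=\set{b+1,\dotsc,n-1}}; only afterwards does it rule out \m{a\leq b} by means of a second binary witness. You reverse this order: you first dispose of \m{a\leq b} with a simple unary modification (sending one element of a non\dash{}trivial outer class into the middle block, where part~\eqref{item:middle-singletons} guarantees singletons), and then, with \m{a=b+1} secured, you exploit the trichotomy coming from~\eqref{item:impl-0-class-not-big} and its dual, finishing again with unary maps whose image lies entirely in one half of \m{D} (Lemma~\ref{lem:pres-of-max-clausal-rel}). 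Your approach is more elementary---unary witnesses throughout, no appeal to Lemma~\ref{lem:binary-f-pres-Rab}---at the price of leaning more heavily on part~\eqref{item:middle-singletons}. One small redundancy: in your second option, part~(i) of Lemma~\ref{lem:necessary-conds-for-equivalences} already delivers \m{a-1>0} and \m{b+1<n-1}, so the boundary case \m{a=n-1} is in fact vacuous and need not be argued separately.
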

\begin{proof}
The implication ``\m{\Longleftarrow}'' is stated in
Lemma~\ref{lem:the-only-equivalence-rel}. Conversely, let us assume that
\m{\Pol{\Ruab}\subs\Pol{\theta}}. For the remainder of the proof we can
suppose \m{0<a\leq b+1\leq n-1} due to
Lemma~\ref{lem:necessary-conds-for-equivalences}%
\eqref{item:a-leq-b}.
We define \m{f\in\Op[2]{D}}
by \m{f\apply{b+1,0}\defeq 0}, \m{f\apply{x,y}\defeq a} if \m{x,y>b} and
\m{f\apply{x,y}\defeq b} else. If \m{x\leq b} or \m{y\leq b}, then
\m{f\apply{x,y}\neq a}, so \m{f\apply{x,y}\leq b}.
Moreover, if \m{x,y\geq a}, then either \m{x,y>b} and \m{f\apply{x,y}=a},
or else \m{x,y\geq a>0} and \m{a\leq x\leq b} or \m{a\leq y\leq b}, whence
\m{f\apply{x,y}=b\geq a}. Therefore, the conditions of
Lemma~\ref{lem:binary-f-pres-Rab} are fulfilled, and so
\m{f\in\Pol{\Ruab}}.
\par

Now, we want to prove that
\m{\fapply{0}_{\theta}=\set{0,\dotsc,a-1}}. If this were false, then by
Lemma~\ref{lem:necessary-conds-for-equivalences}%
\eqref{item:impl-0-class-not-big} we would get \m{a-1>0}, \m{b+1<n-1},
\m{\fapply{x}_{\theta}=\set{x}} for every \m{x\leq b} and
\m{\fapply{n-1}_{\theta}=\set{b+1,\dotsc,n-1}}.
Thus, we have
\m{\apply{b+1,n-1},\apply{0,0}\in\theta}, but since \m{n-1\neq b+1}, we
obtain the tuple \m{\apply{f\apply{b+1,0},f\apply{n-1,0}}=\apply{0,b}},
which does not belong to \m{\theta} as
\m{b\notin\fapply{0}_{\theta}=\set{0}}.
Hence, \m{f\notin\Pol{\theta}}, in contradiction to the
assumed inclusion \m{\Pol{\Ruab}\subs\Pol{\theta}}.
\par
Consequently, we get \m{\fapply{0}_{\theta}=\set{0,\dotsc,a-1}}, and
dually, one can demonstrate
that \m{\fapply{n-1}_{\theta}=\set{b+1,\dotsc,n-1}}. If we can show
\m{a=b+1}, we will be done. As we already know \m{a\leq b+1},
we only have to exclude \m{a<b+1}, \ie\ \m{a\leq b}.
So, in order to obtain a contradiction, we suppose \m{b\geq a}. Then
we have \m{b\notin\fapply{0}_{\theta}=\set{0,\dotsc,a-1}}, \ie\
\m{\apply{0,b}\notin\theta}. If \m{b+1<n-1}, we could use the same
arguments as in the previous paragraph to prove that
\m{f\in\Pol{\Ruab}\setminus\Pol{\theta}}. Hence, we must have
\m{b+1=n-1}, and so $\fapply{y}_{\theta}=\set{y}$ holds for all
$y\geq a$ (recall
Lemma~\ref{lem:necessary-conds-for-equivalences}%
\eqref{item:middle-singletons}).
As $\fapply{0}_{\theta}=\penalty10000\set{0,\dotsc,a-1}$,
                            it follows \m{a-1>0} due
to \m{\theta\neq \Delta}. In this case we can use the dual version of
\m{f} to get a contradiction: define \m{g\in\Op[2]{D}} by
\m{g\apply{a-1,n-1}\defeq n-1}, \m{g\apply{x,y}\defeq b} if \m{x,y<a}, and
\m{g\apply{x,y}\defeq a} else. This function preserves \m{\Ruab} since the
conditions of Lemma~\ref{lem:binary-f-pres-Rab} are met: if \m{x\geq a} or
\m{y\geq a}, then \m{g\apply{x,y}\neq b}, so \m{g\apply{x,y}\geq a}. If
\m{x,y\leq b}, then \m{y< n-1}, so \m{g\apply{x,y}\neq n-1}. So either
\m{x,y<a}, whence \m{g\apply{x,y}=b}, or \m{a\leq x\leq b} or
\m{a\leq y\leq b} such that we get \m{g\apply{x,y}=a\leq b}. Thus,
\m{g\preserves\Ruab}. We finish by demonstrating that
\m{g\mathrel{\npres}\theta}.
Indeed, \m{\apply{0,a-1},\apply{n-1,n-1}\in\theta}, but due to
\m{a\leq b<n-1}, we have \m{a\notin\set{n-1}=\fapply{n-1}_{\theta}}.
So  we obtain that \m{g\mathrel{\npres}\theta} because
\m{\apply{g\apply{0,n-1},g\apply{a-1,n-1}}=\apply{a,n-1}\notin\theta}.
\par
This contradicts \m{\Pol{\Ruab}\subs\Pol{\theta}}, whence \m{a>b}, \ie\
\m{a = b+1}, follows.
\end{proof}

\section{The case of central relations}\label{sect:central-rels}
Inclusions \m{\Pol{\Ruab}\subs\Pol{\rho}} for at least ternary central
relations \m{\rho} have already been excluded
in Corollary~24 of~\cite{BehVarMaxCclonesPreprint}.
Moreover, unary central relations have been studied
in Section~\ref{sect:unary-rels}. So further in this section, we will only
consider binary central relations \m{\rho}. These are reflexive in the
usual sense, \ie\ \m{\Delta\subs \rho}, and hence, we can apply
Corollary~\ref{cor:a-b-geq2-reflexive}, which states
\m{\Pol{\Ruab}\not\subseteq \Pol{\rho}} for \m{a-b\geq 2} and
non\dash{}trivial \m{\rho}.
Next, we prove the same for \m{a-b=1}.

\begin{lemma}\label{lem:bin-central-rels-c<a-c>b}
Let \m{a\in\Dommenosa{0}}, \m{b\in\Dommenosa{n-1}} be such that
\m{a-b\leq 1}, and consider a non\dash{}trivial binary central relation
\m{\rho\subsetneq D^2} having a central element \m{c\in D} satisfying
\m{c<a} or \m{c>b}. Then there exists a function
\m{f\in\Pol[2]{\Ruab}\setminus\Pol{\rho}}.
\end{lemma}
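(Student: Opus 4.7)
The plan is to build a binary operation $f$ matching the sufficient criterion of Lemma~\ref{lem:binary-f-pres-Rab} (hence in \m{\Pol{\Ruab}}) that fails to preserve \m{\rho} on a pair of input tuples crafted from the central element. Since \m{\rho\subsetneq D^2} is a binary central relation containing \m{\Delta\cup\apply{\set{c}\times D}\cup\apply{D\times\set{c}}}, any \m{\apply{u,v}\in D^2\setminus\rho} automatically satisfies \m{u\neq v} and \m{u,v\neq c}; I would fix such a pair. Then the tuples \m{r_1\defeq\apply{u,c}} and \m{r_2\defeq\apply{c,v}} both lie in \m{\rho} by centrality, and applying $f$ columnwise yields \m{\apply{f\apply{u,c},f\apply{c,v}}}. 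The goal is thus to construct \m{f\in\Op[2]{D}} with \m{f\apply{u,c}=u} and \m{f\apply{c,v}=v} and satisfying one of the hypotheses of Lemma~\ref{lem:binary-f-pres-Rab}, so that the output \m{\apply{u,v}\notin\rho} certifies \m{f\mathrel{\npres}\rho}.

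Consider first \m{c<a}, so \m{c\leq a-1\leq b} because \m{a-b\leq 1}. I would aim at the dual form of Lemma~\ref{lem:binary-f-pres-Rab}: \m{f\apply{x,y}\geq a} whenever \m{x\geq a} or \m{y\geq a}, and \m{f\apply{x,y}\leq b} whenever \m{x,y\leq b}. Since \m{c\leq b}, at \m{\apply{u,c}} this reduces to \m{f\apply{u,c}\leq b} when \m{u\leq b} and \m{f\apply{u,c}\geq a} when \m{u\geq a}, and likewise at \m{\apply{c,v}}. As \m{a-b\leq 1} gives \m{D=\set{0,\dotsc,b}\cup\set{a,\dotsc,n-1}}, every \m{u,v\in D} lies in at least one of these two regions, so the prescriptions \m{f\apply{u,c}\defeq u} and \m{f\apply{c,v}\defeq v} are consistent with the above bounds. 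Completing the definition by \m{f\apply{x,y}\defeq a} whenever \m{x\geq a} or \m{y\geq a} and \m{f\apply{x,y}\defeq 0} whenever \m{x,y<a} (so in particular \m{x,y\leq b}), it is immediate that \m{f} satisfies the dual hypothesis of Lemma~\ref{lem:binary-f-pres-Rab}, whence \m{f\in\Pol{\Ruab}}.

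The subcase \m{c>b} goes dually via the original form of Lemma~\ref{lem:binary-f-pres-Rab}, now exploiting \m{c\geq b+1\geq a}; the same assignment at \m{\apply{u,c}} and \m{\apply{c,v}} is consistent with the regions \m{\set{0,\dotsc,b}} and \m{\set{a,\dotsc,n-1}} covering \m{D}. In both subcases \m{r_1,r_2\in\rho} while \m{f\circ\apply{r_1,r_2}=\apply{u,v}\notin\rho}, giving \m{f\in\Pol[2]{\Ruab}\setminus\Pol{\rho}}. The only delicate step I foresee is verifying that the two prescribed values \m{f\apply{u,c}=u} and \m{f\apply{c,v}=v} do not clash with the regional bounds imposed by Lemma~\ref{lem:binary-f-pres-Rab}; this is precisely where both hypotheses \m{a-b\leq 1} and (depending on the case) \m{c<a} or \m{c>b} are used, since together they ensure that \m{c} sits on the side leaving enough room for the chosen values of \m{u} and \m{v}.
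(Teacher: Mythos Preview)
Your argument is correct, and in fact cleaner than the paper's. Both proofs construct a binary \m{f} via Lemma~\ref{lem:binary-f-pres-Rab} and exhibit two tuples in \m{\rho} whose \nbdd{f}image is \m{\apply{u,v}\notin\rho}. The paper, however, uses the pair \m{\apply{c,d},\apply{z,z}} with an auxiliary element \m{d} on the opposite side of \m{c} and a further element \m{z\in\set{c,d}}, and then splits into three cases according to whether \m{u,v\leq b}, \m{u,v\geq a}, or (after symmetry) \m{u\leq b,\,v\geq a}, defining \m{f} differently each time. Your choice of witnesses \m{\apply{u,c},\apply{c,v}} places the central element \m{c} in the \emph{second} coordinate of one tuple and the \emph{first} of the other; because \m{c} already lies in the ``safe'' region (\m{c\leq b} when \m{c<a}, \m{c\geq a} when \m{c>b}), the constraints from Lemma~\ref{lem:binary-f-pres-Rab} at the two special points reduce to conditions on \m{u} and \m{v} separately, which are automatically met since \m{D=\set{0,\dotsc,b}\cup\set{a,\dotsc,n-1}} by \m{a-b\leq 1}. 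This eliminates the three\dash{}way case split entirely; only the dichotomy \m{c<a} versus \m{c>b} (which the paper also needs) remains. The trade\dash{}off is nil: your construction is strictly more economical and yields the same conclusion.
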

\begin{proof}
If \m{c<a} then choose \m{d>b}, \eg\ \m{d=n-1}, else, if \m{c>b}, then
choose \m{d<a}, \eg\ \m{d=0}.
Moreover, let \m{\apply{u,v}\in D^2\setminus\rho}. We will consider three
cases, (1)~that \m{u,v\leq b}, (2)~\m{u,v\geq a}, which is not disjoint
from the previous case, and (3)~that neither~(1) nor~(2) holds. In
case~(3) no generality is lost in assuming \m{u<a\leq b+1}, \ie\
\m{u\leq b}, otherwise one can just swap \m{u} and \m{v} due to \m{\rho}
being symmetric. Since we are not in case~(1), we cannot have \m{v\leq b},
hence \m{v>b\geq a-1}, \ie\ \m{v\geq a}. So (3) means \m{u\leq b} and
\m{v\geq a}. In this case we define \m{z\defeq c}. For (1) we choose
\m{z\in\set{c,d}} such that \m{z<a}, implying \m{z\leq a-1\leq b}, and in
case~(2) we pick \m{z\in\set{c,d}} such that \m{z>b},
\ie\ \m{z\geq b+1\geq a}. We define now an operation \m{f\in\Op[2]{D}}. In
case~(1) we put \m{f\apply{x,y}\defeq \min\apply{x,y}} if \m{x,y\geq a},
\m{f\apply{x,y}\defeq v} if \m{\apply{x,y}=\apply{c,z}}, and
\m{f\apply{x,y}\defeq u} else. In case~(2) we set \m{f\apply{x,y}\defeq
\max\apply{x,y}} if \m{x,y\leq b}, \m{f\apply{x,y}\defeq v} if
\m{\apply{x,y}=\apply{c,z}}, and \m{f\apply{x,y}\defeq u} else. For~(3)
put \m{f\apply{x,y}\defeq\max\apply{x,y}} if \m{x,y\leq b} and
\m{\apply{x,y}\neq\apply{c,z}}, \m{f\apply{x,y}\defeq u} if
\m{\apply{x,y}=\apply{c,z}(=\apply{c,c})}, and \m{f\apply{x,y}\defeq v}
else, provided that \m{c<a}. Otherwise, if \m{c>b} in case~(3), we define
\m{f\apply{x,y}\defeq \min\apply{x,y}} if \m{x,y\geq a} and
\m{\apply{x,y}\neq \apply{c,z}}, \m{f\apply{x,y}\defeq v} if
\m{\apply{x,y}=\apply{c,z}(=\apply{c,c})}, and \m{f\apply{x,y}\defeq u}
else. It is not hard to check that always the function is
well\dash{}defined and that \m{f\in\Pol{\Ruab}} by
Lemma~\ref{lem:binary-f-pres-Rab}. Since \m{\rho} is reflexive and \m{c}
is a central element, we have \m{\apply{c,d},\apply{z,z}\in\rho}. However,
\m{\apply{f\apply{c,z},f\apply{d,z}}=\apply{u,v}\notin\rho} for case~(3)
and \m{c<a}, and otherwise we have
$(f(c,z),f(d,z))=(v,u)\notin\rho$ by symmetry
of \m{\rho}. This shows that $f\notin\Pol{\rho}$.\,{}
\end{proof}

\begin{corollary}\label{cor:bin-central-rels-a=b+1}
Let \m{a,b\in D} such that \m{a-b=1} and \m{\rho\subsetneq D^2} be any
non\dash{}trivial binary central relation, then there exists a function
\m{f\in \Pol[2]{\Ruab}\setminus\Pol{\rho}}.
\end{corollary}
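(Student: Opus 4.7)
The corollary will follow as an immediate consequence of Lemma~\ref{lem:bin-central-rels-c<a-c>b}. The plan is simply to verify that the hypotheses of that lemma are automatically satisfied when $a - b = 1$.

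Since $a - b = 1 \leq 1$, the condition on $a$ and $b$ in Lemma~\ref{lem:bin-central-rels-c<a-c>b} holds. By definition (see the discussion in Section~\ref{sect:preliminaries}), any central relation $\rho$ possesses at least one central element $c \in D$. Now the key observation is that from $a = b + 1$ it follows that $D = \set{0,\dotsc,b} \cup \set{a,\dotsc,n-1}$, so every element of $D$ lies either in $\set{0,\dotsc,b}$ or in $\set{a,\dotsc,n-1}$. In particular, for the central element $c$ we have $c \leq b$ (which means $c < a$) or $c \geq a$ (which means $c > b$). Hence the remaining hypothesis of Lemma~\ref{lem:bin-central-rels-c<a-c>b} on the existence of a central element satisfying $c < a$ or $c > b$ is satisfied automatically.

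Applying Lemma~\ref{lem:bin-central-rels-c<a-c>b} then yields a function $f \in \Pol[2]{\Ruab} \setminus \Pol{\rho}$, which is exactly the claim. Since all the work has been done in the preceding lemma, there is no real obstacle here; the only small point to check is that the dichotomy $c \leq b$ or $c \geq a$ really exhausts all possibilities when $a = b+1$, which is immediate from $a-1 = b$.
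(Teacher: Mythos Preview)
Your proof is correct and follows essentially the same route as the paper's own argument: both simply verify that the hypotheses of Lemma~\ref{lem:bin-central-rels-c<a-c>b} are met whenever $a-b=1$, via the dichotomy $c<a$ or $c\geq a=b+1>b$ for any central element $c$. The only small item you leave implicit, and which the paper spells out, is that $a-b=1$ forces $a=b+1\geq 1>0$ and $b=a-1\leq n-2<n-1$, so that $a\in\Dommenosa{0}$ and $b\in\Dommenosa{n-1}$ as required by the lemma; this is immediate, but strictly speaking is part of ``the hypotheses of that lemma'' you set out to check.
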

\begin{proof}
Clearly \m{a-b=1} implies \m{a=b+1\geq 1>0} and \m{b=a-1<a\leq n-1}.
Moreover, \m{\rho} must have a central element \m{c\in D}.
We either have \m{c\geq a = b+1>b} or \m{c<a}. In both cases,
Lemma~\ref{lem:bin-central-rels-c<a-c>b} yields the result.
\end{proof}

The following lemma states conditions for an inclusion.
\begin{lemma}\label{lem:incl-binary-central}
Let \m{a,b\in D} such that \m{0<a\leq b<n-1}. Then we have
\begin{equation*}
\genRelClone{\Ruab}\ni \Ruab \cap \apply{\Ruab}^{-1}
= \bigcup_{a\leq c\leq b}
                  \apply{\set{0,\dotsc,c}^2\cup \set{c,\dotsc,n-1}^2}
\eqdef\sigma_{a,b},
\end{equation*}
and \m{\sigma_{a,b}\subs D^2\setminus\set{\apply{0,n-1},\apply{n-1,0}}}
is a non\dash{}trivial binary central relation having any
\m{c\in\set{a,\dotsc,b}} as a central element. Moreover, we have the
inclusion \m{\Pol{\Ruab}\subs\Pol{\sigma_{a,b}}}.
\end{lemma}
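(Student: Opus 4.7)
The plan is to dispatch the several claims of the lemma in order, with most of the work concentrated in establishing the set identity $\Ruab \cap \apply{\Ruab}^{-1} = \sigma_{a,b}$. For this identity I would expand the membership condition for the intersection as the conjunction of ``$x \geq a$ or $y \leq b$'' with ``$y \geq a$ or $x \leq b$'', and show by a short case distinction that this is equivalent to ``$\max\apply{x,y} \leq b$ or $\min\apply{x,y} \geq a$'': whenever one of $x,y$ lies in $\set{a,\dotsc,b}$, the remaining coordinate is automatically $\geq a$ or $\leq b$, while the two extreme cases are direct. Conversely, a pair $\apply{x,y}$ belongs to $\sigma_{a,b}$ precisely when one can pick $c\defeq\max\apply{\max\apply{x,y},a}\in\set{a,\dotsc,b}$ with $x,y\in\set{0,\dotsc,c}$, or $c\defeq\min\apply{\min\apply{x,y},b}\in\set{a,\dotsc,b}$ with $x,y\in\set{c,\dotsc,n-1}$, so the two descriptions agree. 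This parallels the computation of $\Ruab\cap\apply{\Ruab}^{-1}$ carried out in Lemma~\ref{lem:Rab-entails-0b-an-1} for the complementary case $a>b$.

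The membership $\sigma_{a,b}\in \genRelClone{\Ruab}$ is then immediate, since $\apply{\Ruab}^{-1}$ arises from $\Ruab$ by swapping arguments in the defining pp-formula, and intersection of relations of the same arity corresponds to conjunction of atomic formulae. The exclusion of the corner pairs $\apply{0,n-1}$ and $\apply{n-1,0}$ follows because every $c\in\set{a,\dotsc,b}$ fulfils $0<a\leq c\leq b<n-1$, so neither $\set{0,\dotsc,c}$ contains $n-1$ nor does $\set{c,\dotsc,n-1}$ contain $0$.

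For the central-relation properties, symmetry is inherent in the form of $\sigma_{a,b}$ as a union of Cartesian squares, and reflexivity holds because each $x\in D$ either satisfies $x\leq b$, giving $\apply{x,x}\in\set{0,\dotsc,b}^2$ via $c=b$, or satisfies $x>b\geq a$, giving $\apply{x,x}\in\set{a,\dotsc,n-1}^2$ via $c=a$. To see that every $c\in\set{a,\dotsc,b}$ is central, note that an arbitrary $y\in D$ lies in $\set{0,\dotsc,c}$ or in $\set{c,\dotsc,n-1}$, placing $\apply{c,y}$ in the respective square; the symmetry of $\sigma_{a,b}$ then takes care of pairs $\apply{y,c}$. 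Non-triviality reduces to checking that $\sigma_{a,b}$ is neither $\Delta$ (witnessed by $\apply{0,a}\in\sigma_{a,b}$, using $c=a$ together with $0\neq a$) nor the full relation $D\times D$ (by the corner-pair exclusion above), so it cannot coincide with any binary diagonal relation. Finally, the inclusion $\Pol{\Ruab}\subs\Pol{\sigma_{a,b}}$ is a formal consequence of $\sigma_{a,b}\in\genRelClone{\Ruab}=\Inv{\Pol{\Ruab}}$. No step presents a genuine obstacle; only the case analysis for the set identity demands a little care, specifically in the middle range $\set{a,\dotsc,b}$, where the witness $c$ must be chosen judiciously.
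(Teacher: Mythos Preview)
Your proposal is correct and follows the same overall plan as the paper: verify the set identity, check the central\dash{}relation axioms (symmetry, reflexivity, central elements, non\dash{}triviality, exclusion of the corner pairs), and deduce the \m{\Pol}\dash{}inclusion from \m{\sigma_{a,b}\in\genRelClone{\Ruab}}. The one noteworthy difference lies in how the identity \m{\Ruab\cap\apply{\Ruab}^{-1}=\sigma_{a,b}} is established. The paper proceeds by a direct multi\dash{}branch case split on the position of \m{x} relative to \m{a} and \m{b}, picking the witness \m{c} ad hoc in each branch. You instead interpose the equivalent condition ``\m{\max\apply{x,y}\leq b} or \m{\min\apply{x,y}\geq a}'', which makes the choice of \m{c} uniform (namely \m{c=\max\apply{\max\apply{x,y},a}} or \m{c=\min\apply{\min\apply{x,y},b}}) and immediately yields the closed form \m{\sigma_{a,b}=\set{0,\dotsc,b}^2\cup\set{a,\dotsc,n-1}^2} that the paper only records later in Theorem~\ref{thm:char-relationship-max-clones-max-cclones}. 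Both arguments are elementary; yours is a bit more transparent and avoids the nested sub\dash{}cases.
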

\begin{proof}
First, we demonstrate that \m{\sigma_{a,b}} is a non\dash{}trivial binary
central relation. It is clear that \m{\sigma_{a,b}} is symmetric as a
union of symmetric relations. Moreover, since \m{a\leq b}, there exists at
least one \m{c\in\set{a,\dotsc,b}}, \eg\ \m{c=a}. Now consider an arbitrary
such element \m{a\leq c\leq b}. If \m{x\in D} satisfies \m{x\leq c}, then
\m{\apply{x,x}\in\set{0,\dotsc,c}^2\subs\sigma_{a,b}}, else \m{x> c} and
\m{\apply{x,x}\in\set{c,\dotsc,n-1}^2\subs\sigma_{a,b}}. Hence,
\m{\Delta\subs\sigma_{a,b}}, \ie\ it is reflexive. Moreover, \m{c} is a
central element for \m{\sigma_{a,b}}, as for \m{x\leq c} the pairs
\m{\apply{x,c}} and \m{\apply{c,x}} belong to
\m{\set{0,\dotsc,c}^2\subs\sigma_{a,b}}, and
otherwise, we have \m{x>c} and
\m{\apply{x,c}} and \m{\apply{c,x}} lie in
\m{\set{c,\dotsc,n-1}^2\subs\sigma_{a,b}}.
Besides, we have \m{0<a\leq c}, so
\m{\apply{0,n-1}\notin\set{c,\dotsc,n-1}^2}, and neither have we
\m{\apply{0,n-1}\in\set{0,\dotsc,c}^2} due to \m{c\leq b<n-1}. As this is
true for any \m{a\leq c\leq b}, we obtain
\m{\apply{0,n-1}\notin\sigma_{a,b}}, which implies
\m{\apply{n-1,0}\notin\sigma_{a,b}} by symmetry of~\m{\sigma_{a,b}}.
\par

Next we prove that \m{\Ruab\cap\apply{\Ruab}^{-1}=\sigma_{a,b}}. Consider
any \m{a\leq c\leq b}. For all \m{x,y\leq c} we have \m{x,y\leq c\leq b}
and so \m{\apply{x,y},\apply{y,x}\in\Ruab}.
Dually, for all \m{x,y\geq c\geq a} we can infer
\m{\apply{x,y},\apply{y,x}\in\Ruab}, too. Therefore,
\m{\apply{x,y}\in\Ruab\cap\apply{\Ruab}^{-1}}.
\par

Conversely, suppose that \m{\apply{x,y}\in\Ruab\ni\apply{y,x}}. First,
consider the case that \m{x\leq b}. If \m{x\geq a}, too, then we have
\m{x,y\geq c} or \m{x,y\leq c} for \m{c\defeq x}. Else, we have \m{x<a},
which implies \m{y\leq b} due to \m{\apply{x,y}\in\Ruab}.
We consider two sub\dash{}cases: if \m{y\leq a}, then
\m{x,y\leq a\eqdef c}. Otherwise, we have \m{b\geq y>a} and put
\m{c\defeq y}. Then it follows \m{x<a\leq y=c} and \m{y\leq c}, finishing the
argument for the first case. Second, we have the possibility that
\m{x>b\geq a}. Then \m{\apply{y,x}\in\Ruab} implies \m{y\geq a}. Putting
\m{c\defeq a}, we have \m{x,y\geq a=c} in this case. Both times we have
shown that \m{\apply{x,y}\in\sigma_{a,b}}.
\par

The inclusion we have just demonstrated implies that
\m{\sigma_{a,b}\in\genRelClone{\Ruab}}, hence
\m{\Pol{\Ruab}=\Pol{\genRelClone{\Ruab}}\subs\Pol{\sigma_{a,b}}}.
\end{proof}

\begin{lemma}\label{lem:unary-f-mapping-complement-to-whole-complement}
Let \m{a,b\in D} such that \m{a\leq b} and \m{x_1,x_2 <a}, \m{y_1,y_2>b}.
Then we have \m{f\in\Pol[1]{\Ruab}} for \m{f\in\Op[1]{D}} defined by
\m{f\apply{x_1}\defeq x_2}, \m{f\apply{y_1}\defeq y_2}
and \m{f\apply{z}\defeq z} for \m{z\in\Dommenosa{x_1,y_1}}.
\end{lemma}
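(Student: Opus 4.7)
The plan is to reduce the claim to Lemma~\ref{lem:unary-f-pres-a-b-pres-Rab}, whose sufficient condition (under \m{a\leq b}) requires that \m{f} preserve the two sets \m{\set{0,\dotsc,b}} and \m{\set{a,\dotsc,n-1}}. Since \m{x_1,x_2<a\leq b<y_1,y_2}, the two exceptional arguments \m{x_1} and \m{y_1} lie in different ``halves'' of \m{D}, so the verification should be a short case distinction.

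First I would check that \m{f} preserves \m{\set{0,\dotsc,b}}: for any \m{z\leq b}, the hypothesis \m{y_1>b} excludes \m{z=y_1}, so either \m{z=x_1} and \m{f\apply{x_1}=x_2<a\leq b}, or \m{f\apply{z}=z\leq b}. Dually, for \m{z\geq a}, the hypothesis \m{x_1<a} excludes \m{z=x_1}, so either \m{z=y_1} with \m{f\apply{y_1}=y_2>b\geq a}, or \m{f\apply{z}=z\geq a}. Both sets are thus preserved, and Lemma~\ref{lem:unary-f-pres-a-b-pres-Rab} delivers \m{f\in\Pol[1]{\Ruab}}.

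No genuine obstacle arises; the hypotheses \m{x_1,x_2<a} and \m{y_1,y_2>b} together with \m{a\leq b} are tailored precisely so that the redefinition at \m{x_1} stays in the lower block \m{\set{0,\dotsc,a-1}\subs\set{0,\dotsc,b}} and the redefinition at \m{y_1} in the upper block \m{\set{b+1,\dotsc,n-1}\subs\set{a,\dotsc,n-1}}, whence neither of the two relevant boundaries is ever crossed.
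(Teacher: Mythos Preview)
Your proof is correct and follows essentially the same approach as the paper: both verify that \m{f} preserves \m{\set{0,\dotsc,b}} and \m{\set{a,\dotsc,n-1}} and then invoke Lemma~\ref{lem:unary-f-pres-a-b-pres-Rab}. The paper additionally remarks explicitly that \m{f} is well\dash{}defined (since \m{x_1<a\leq b<y_1} forces \m{x_1\neq y_1}), a point you cover implicitly by noting that \m{x_1} and \m{y_1} lie in different halves of \m{D}.
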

\begin{proof}
First, the function \m{f\in\Op[1]{D}} is
well\dash{}defined due to \m{x_1<a\leq b<y_1}. Since \m{x_1,x_2< a\leq b}
and \m{y_1>b}, it is evident that \m{f\preserves\set{0,\dotsc,b}}.
Similarly, we obtain that \m{f\preserves \set{a,\dotsc,n-1}}. Using
Lemma~\ref{lem:unary-f-pres-a-b-pres-Rab}, we can infer that
\m{f\in\Pol{\Ruab}}.
\end{proof}

With these lemmas at hand, we can prove the following characterisation.
\begin{proposition}\label{prop:incl-bin-central-iff-sigmaab}
Let \m{a,b\in D}, \m{\sigma_{a,b}\subs D^2} be defined as in
Lemma~\ref{lem:incl-binary-central} and \m{\rho\subsetneq D^2} be a
non\dash{}trivial binary central relation. Then we have
\begin{equation*}
\Pol{\Ruab}\subs \Pol{\rho} \iff 0<a\leq b<n-1 \text{ and }
\rho=\sigma_{a,b}.
\end{equation*}%
\end{proposition}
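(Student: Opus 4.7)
The ``\m{\Longleftarrow}'' direction is Lemma~\ref{lem:incl-binary-central}. For the converse, suppose \m{\Pol{\Ruab}\subs\Pol{\rho}}; I would proceed in three steps.

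\emph{Parameters.} The range \m{0<a\leq b<n-1} is forced as follows: if \m{a=0} or \m{b=n-1} then \m{\Ruab=D^2} is trivial and \m{\Pol{\Ruab}=\Op{D}} cannot be contained in \m{\Pol{\rho}\subsetneq\Op{D}}. Next, \m{a\leq b} follows from Corollary~\ref{cor:a-b-geq2-reflexive} (excluding \m{a-b\geq 2} since \m{\rho} is reflexive and non-trivial) together with Corollary~\ref{cor:bin-central-rels-a=b+1} (excluding \m{a-b=1}).

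\emph{The inclusion \m{\sigma_{a,b}\subs\rho}.} Fix any central element \m{c_0} of \m{\rho}. Since \m{a-b\leq 1}, Lemma~\ref{lem:bin-central-rels-c<a-c>b} forces \m{c_0\in\set{a,\dotsc,b}}. For every \m{c_1\in\set{a,\dotsc,b}}, the transposition \m{\apply{c_0,c_1}} preserves both \m{\set{0,\dotsc,b}} and \m{\set{a,\dotsc,n-1}}, hence belongs to \m{\Pol[1]{\Ruab}\subs\Pol{\rho}} by Lemma~\ref{lem:unary-f-pres-a-b-pres-Rab}; being a bijection it transports centrality, so every \m{c\in\set{a,\dotsc,b}} is central for \m{\rho} and therefore \m{\set{a,\dotsc,b}\times D\cup D\times\set{a,\dotsc,b}\subs\rho}. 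The operations \m{\min} and \m{\max} satisfy the hypotheses of Lemma~\ref{lem:binary-f-pres-Rab} and thus lie in \m{\Pol{\Ruab}\subs\Pol{\rho}}; for \m{x,y<a\leq c_0} applying \m{\min} to \m{\apply{x,c_0},\apply{c_0,y}\in\rho} yields \m{\apply{x,y}\in\rho}, hence \m{\set{0,\dotsc,a-1}^2\subs\rho}, and dually \m{\set{b+1,\dotsc,n-1}^2\subs\rho} via \m{\max}. A straightforward case distinction confirms that the four pieces just produced amount to \m{\sigma_{a,b}}.

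\emph{The inclusion \m{\rho\subs\sigma_{a,b}}.} Assume towards a contradiction the existence of \m{\apply{x_0,y_0}\in\rho\setminus\sigma_{a,b}}; by symmetry of \m{\rho} we may take \m{x_0<a} and \m{y_0>b}. For any prescribed \m{x<a} and \m{y>b}, Lemma~\ref{lem:unary-f-mapping-complement-to-whole-complement} produces \m{f\in\Pol{\Ruab}\subs\Pol{\rho}} with \m{f\apply{x_0}=x} and \m{f\apply{y_0}=y}; applying \m{f} to \m{\apply{x_0,y_0}} yields \m{\apply{x,y}\in\rho}. Hence \m{\set{0,\dotsc,a-1}\times\set{b+1,\dotsc,n-1}\subs\rho} and by symmetry of \m{\rho} its transpose as well, so together with the previous step we exhaust \m{D^2}, contradicting \m{\rho\subsetneq D^2}. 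The main obstacle is the second step: after Lemma~\ref{lem:bin-central-rels-c<a-c>b} pins down just one central element \m{c_0\in\set{a,\dotsc,b}}, one has to first spread centrality across the whole interval via transpositions and then use \m{\min} and \m{\max} to link the ``corner squares'' \m{\set{0,\dotsc,a-1}^2} and \m{\set{b+1,\dotsc,n-1}^2} back to the central strip.
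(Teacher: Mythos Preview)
Your argument is correct; every step checks out, including the transport of centrality by transpositions (bijective polymorphisms send central elements to central elements) and the use of \m{\min} and \m{\max}, both of which meet the hypotheses of Lemma~\ref{lem:binary-f-pres-Rab}. The third step coincides with the paper's argument, since \m{D^2\setminus\Ruab=\set{0,\dotsc,a-1}\times\set{b+1,\dotsc,n-1}} is precisely the set you describe.

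The only substantive difference from the paper is in your second step. The paper obtains \m{\sigma_{a,b}\subs\rho} in one stroke: having located a single central element \m{d} with \m{a\leq d\leq b} (hence \m{d>0}), it considers, for any \m{\apply{x,y}\in\set{0,\dotsc,b}^2\cup\set{a,\dotsc,n-1}^2}, the unary map \m{f} sending \m{0\mapsto x} and everything else to \m{y}. Since \m{\im\apply{f}=\set{x,y}} lies entirely below \m{b} or entirely above \m{a}, Lemma~\ref{lem:pres-of-max-clausal-rel} gives \m{f\in\Pol{\Ruab}\subs\Pol{\rho}}, and applying \m{f} to \m{\apply{0,d}\in\rho} yields \m{\apply{x,y}\in\rho}. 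This bypasses both the spreading of centrality and the \m{\min}/\m{\max} argument. Your route is longer but has the virtue of using only ``natural'' operations (transpositions, \m{\min}, \m{\max}) rather than an ad hoc near-constant map; the paper's route is shorter and shows that a single central element already suffices without ever needing the whole interval \m{\set{a,\dotsc,b}} to be central.
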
%
\begin{proof}
The implication ``\m{\Longleftarrow}'' holds by
Lemma~\ref{lem:incl-binary-central}. Conversely, suppose that
\m{\Pol{\Ruab}\subs \Pol{\rho}} is true. Then \m{a\neq 0} and
\m{b\neq n-1}, as otherwise \m{\Ruab = D^2} and then
\m{\Pol{\Ruab}=\Op{D}}, which is not contained in any maximal clone.
Moreover, as \m{\rho} is reflexive and non\dash{}trivial,
Corollaries~\ref{cor:a-b-geq2-reflexive}
and~\ref{cor:bin-central-rels-a=b+1} allow us to infer that
\m{a\leq b}. It remains to show that \m{\rho=\sigma_{a,b}}.
\par

First, let us consider the inclusion \m{\sigma_{a,b}\subs\rho}. For this
let \m{d\in D} be a central element of \m{\rho}. If \m{d<a} or \m{d>b},
then this would violate the assumed inclusion
\m{\Pol{\Ruab}\subs\Pol{\rho}} due to
Lemma~\ref{lem:bin-central-rels-c<a-c>b}. Hence, we have \m{a\leq d\leq b}.
For any pair \m{\apply{x,y}\in\set{0,\dotsc,b}^2\cup\set{a,\dotsc,n-1}^2}
we can define a unary function \m{f\in\Op[1]{D}} by \m{f\apply{0}\defeq x}
and \m{f\apply{z}\defeq y} if \m{z\in\Dommenosa{0}}. Obviously, we have
\m{\im\apply{f} = \set{x,y}}, such that
\m{\im\apply{f}\subs\set{0,\dotsc,b}} or
\m{\im\apply{f}\subs\set{a,\dotsc,n-1}}.
So using Lemma~\ref{lem:pres-of-max-clausal-rel} we obtain
\m{f\in\Pol{\Ruab}\subs\Pol{\rho}}, and thus
\m{\apply{x,y}=\apply{f\apply{0},f\apply{d}}\in \rho} since \m{d\geq a>0}
was a central element of \m{\rho}. This demonstrates that
\m{\rho\sups\set{0,\dotsc,b}^2\cup\set{a,\dotsc,n-1}^2}. Evidently, the
latter set equals \m{\sigma_{a,b}}.
\par

To prove that \m{\rho\subs\sigma_{a,b}} we rule out that
\m{\apply{D^2\setminus\Ruab}\cap \rho\neq\emptyset}. Namely, if there
were some \m{\apply{x_1,y_1}\in\apply{D^2\setminus\Ruab}\cap\rho}, then
for every pair \m{\apply{x_2,y_2}\in D^2\setminus\Ruab}, we could use the
function \m{f\in\Pol{\Ruab}\subs \Pol{\rho}} constructed in
Lemma~\ref{lem:unary-f-mapping-complement-to-whole-complement} to show
that \m{\apply{x_2,y_2}=\apply{f\apply{x_1},f\apply{y_1}}\in\rho}. This
would mean \m{D^2\setminus\Ruab\subs\rho}, and, by symmetry of~\m{\rho},
would imply \m{D^2\setminus\apply{\Ruab}^{-1}\subs\rho}.
Hence, we would have the inclusion
\m{D^2\setminus\sigma_{a,b} =
D^2\setminus\apply{\Ruab\cap\apply{\Ruab}^{-1}} \subs\rho}. Together with
\m{\sigma_{a,b}\subs \rho}, we would get \m{\rho=D^2}, in contradiction to
\m{\rho} being non\dash{}trivial.
\par

Therefore, it holds \m{\apply{D^2\setminus\Ruab}\cap \rho=\emptyset}, which
means \m{\rho\subs\Ruab}. By symmetry of~\m{\rho} this implies
\m{\rho=\rho^{-1}\subs\apply{\Ruab}^{-1}}, and thus
\m{\rho\subs\Ruab\cap\apply{\Ruab}^{-1}=\sigma_{a,b}}.
\end{proof}

\section{Theorem statement}\label{sect:thm-statement}
We can combine the previously proven results to obtain the following
theorem, giving a complete description of the relationship between maximal
clones and maximal clausal clones.
\par

\begin{theorem}\label{thm:char-relationship-max-clones-max-cclones}
For every maximal \cclone\xspace \m{\Pol{\Ruab}} on
\m{D=\set{0,\dotsc,n-1}}, where \m{n\in\N}, and
\m{a\in\Dommenosa{0}} and \m{b\in\Dommenosa{n-1}}, there
exists precisely one maximal clone \m{M} such that \m{\Pol{\Ruab}\subs M}.
\par

More precisely, we have that
\begin{itemize}
\item
\m{\Pol{\Relab{(1)}{(0)}} = \Pol{\leq_{2}}} for \m{n=2};
\item
for \m{n\geq 3} the following inclusions hold:
\begin{itemize}
\item
\m{\Pol{\Ruab}\subs\Pol{\rho}} if \m{a-b>1},
where \m{\rho=\set{0,\dotsc,b}\cup\set{a,\dotsc,n-1}} is a unary
non\dash{}trivial relation;
\item
\m{\Pol{\Ruab}\subs\Pol{\theta}} if \m{a-b=1}, where \m{\theta} is the
equivalence relation on \m{D} given by the partition
\m{\factorBy{D}{\theta} = \set{\set{0,\dotsc,b},\set{a,\dotsc,n-1}}}; and
\item
\m{\Pol{\Ruab}\subs\Pol{\sigma_{a,b}}} if \m{a-b<1} where
\m{\sigma_{a,b}} denotes the binary central relation
\m{\set{0,\dotsc,b}^2\cup\set{a,\dotsc,n-1}^2}.
\end{itemize}
\end{itemize}
\end{theorem}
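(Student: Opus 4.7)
The plan is to combine \name{Rosenberg}'s classification of maximal clones (Theorem~\ref{thm:Rosenberg-classification}) with the comprehensive analyses carried out in Sections~\ref{sect:unary-rels}--\ref{sect:central-rels}. Any maximal clone \m{M} containing \m{\Pol{\Ruab}} must have the form \m{\Pol{\rho}} with \m{\rho} belonging to one of the six Rosenberg types, so the task reduces to inspecting each type and extracting existence and uniqueness in each regime of the parameters \m{a,b}.

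The case \m{n=2} is immediate: the constraints \m{a\in\Dommenosa{0}} and \m{b\in\Dommenosa{n-1}} force \m{a=1} and \m{b=0}, and unwinding Definition~\ref{def:clausal-relation} shows that \m{\Relab{(1)}{(0)}} is literally the canonical order \m{\leq_{2}} on \m{\set{0,1}}. Hence \m{\Pol{\Ruab}} is itself a maximal clone, and trivially the unique maximal clone it lies in.

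For \m{n\geq 3} I would dispatch the six Rosenberg types one by one. Prime permutation graphs, affine relations, central relations of arity \m{\geq 3}, and \nbdd{h}regular relations are eliminated for every choice of \m{a,b} by the results imported from~\cite{BehVarMaxCclonesPreprint} and recalled in Section~\ref{sect:preliminaries}. Bounded order relations are excluded wholesale by Proposition~\ref{prop:bounded-orders}. The three surviving types are non\dash{}trivial unary subsets (the \m{h=1} central relations), non\dash{}trivial equivalences, and binary central relations, whose inclusion behaviour is fully characterised by Lemma~\ref{lem:incl-unary-rels}, Proposition~\ref{prop:char-nontriv-eq}, and Proposition~\ref{prop:incl-bin-central-iff-sigmaab} respectively, each time with a single admissible \m{\rho}.

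To finish, I would split the parameter space into the mutually exclusive and jointly exhaustive regimes \m{a-b\geq 2}, \m{a-b=1}, and \m{a\leq b}. In each regime exactly one of the three cited characterisations produces an inclusion while the other two explicitly forbid it: for \m{a-b\geq 2} only Lemma~\ref{lem:incl-unary-rels} fires (equivalences need \m{a=b+1}, binary centrals need \m{a\leq b}); for \m{a-b=1} only Proposition~\ref{prop:char-nontriv-eq} fires; and for \m{a\leq b} only Proposition~\ref{prop:incl-bin-central-iff-sigmaab} fires. This simultaneously yields existence and uniqueness of \m{M} and identifies it as the polymorphism set of the relation stated in the theorem. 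The main (and essentially only) obstacle is the clerical verification that these three regime conditions line up exactly with both the positive and the negative sides of the three cited results, so that the case analysis is genuinely exhaustive; no new construction is needed beyond assembling what has already been proved.
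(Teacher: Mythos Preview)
Your proposal is correct and mirrors the paper's own proof: eliminate the Rosenberg types that can never contain a maximal \cclone\ (prime permutations, affine, at least ternary central, \nbdd{h}regular via~\cite{BehVarMaxCclonesPreprint}; bounded orders for \m{n\geq 3} via Proposition~\ref{prop:bounded-orders}), and then feed the three surviving types through Lemma~\ref{lem:incl-unary-rels}, Proposition~\ref{prop:char-nontriv-eq}, and Proposition~\ref{prop:incl-bin-central-iff-sigmaab}, whose ``if and only if'' forms partition the parameter space into the regimes \m{a-b\geq 2}, \m{a-b=1}, \m{a\leq b} and simultaneously yield existence and uniqueness. One small slip to fix in the \m{n=2} case: \m{\Relab{(1)}{(0)}=\lset{(x,y)}{x\geq 1\vee y\leq 0}=\set{(0,0),(1,0),(1,1)}} is the \emph{dual} order \m{\geq_2}, not literally \m{\leq_2}; your conclusion \m{\Pol{\Relab{(1)}{(0)}}=\Pol{\leq_2}} is nonetheless correct because \m{\Pol{\rho}=\Pol{\rho^{-1}}} for every binary \m{\rho} (the paper sidesteps this by citing~\cite{Var} for \m{n=2}).
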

\begin{proof}
Summarising previous work,
inclusions \m{\Pol{\Ruab}\subs\Pol{\rho}} are
impossible whenever \m{\rho} is the graph of a prime permutation
(\cite[Lemma~20]{BehVarMaxCclonesPreprint}), an affine
relation corresponding to some elementary \name{Abel}ian \nbdd{p}group
(\cite[Lemma~21]{BehVarMaxCclonesPreprint}), an at least ternary
(non\dash{}trivial) central or \nbdd{h}regular relation
(\cite[Corollary~24]{BehVarMaxCclonesPreprint}),
or a bounded partial order relation for \m{n\geq 3} (Proposition~\ref{prop:bounded-orders}).
So from the types of relations listed in
Theorem~\ref{thm:Rosenberg-classification} only non\dash{}trivial
equivalence relations, bounded partial order
relations for \m{n=2} and unary and binary central relations remain.
\par

Lemma~\ref{lem:incl-unary-rels}
and Propositions~\ref{prop:char-nontriv-eq}
and~\ref{prop:incl-bin-central-iff-sigmaab} confirm the inclusions claimed
in the theorem for \m{n\geq 3}. We only have to prove that each maximal
\cclone\xspace is not contained in any other maximal clone. For instance,
if \m{a-b=1}, then Proposition~\ref{prop:incl-bin-central-iff-sigmaab} and
Lemma~\ref{lem:incl-unary-rels} show that \m{\Pol{\Ruab}} is not contained
in \m{\Pol{\rho}} for any non\dash{}trivial unary or binary central
relation \m{\rho}. Moreover, by Proposition~\ref{prop:char-nontriv-eq},
an inclusion \m{\Pol{\Ruab}\subs\Pol{\theta}}, where \m{\theta} is a
non\dash{}trivial equivalence relation, implies that \m{\theta} is exactly
the equivalence stated in the theorem. For the cases \m{a-b \gtrless 1}
analogous arguments prove that \m{\Pol{\Ruab}} is a subset of a unique
maximal clone.
\par

The statements concerning \m{\abs{D}=n=2} have been established already
in~\cite[Theorem~2.14]{Var} (see
also~\cite[Theorem~6]{BehVarMaxCclonesPreprint}): the clone of monotone
\name{Boole}an functions is the only maximal \cclone\xspace on a
two\dash{}element domain.
\end{proof}

From the previous theorem, we can derive a completeness criterion for
clones on finite sets described by clausal relations. This will require
the following additional lemma.
\begin{lemma}\label{lem:all-non-full-cclones-below-maximal-one}
Let \m{n\in\N}, \m{D=\set{0,\dotsc,n-1}} and \m{Q\subs \CR} be a set of
clausal relations. If\/ \m{\Pol{Q}\subsetneq \Op{D}}, then there is a
maximal \cclone\xspace\/ \m{\Pol{\Ruab}}
(\m{a\in\Dommenosa{0}},
\m{b\in\Dommenosa{n-1}}) such that \m{\Pol{Q}\subs\Pol{\Ruab}}.
\end{lemma}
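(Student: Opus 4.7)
The plan is to locate a non-trivial clausal relation in $Q$ and then contract it via variable identification to a binary clausal relation of the form $\Ruab$ that gives rise to a maximal \cclone{} by Theorem~\ref{teo_caracterization_cclonemax}. Since every operation in $\Op{D}$ preserves $D^{p+q}$, the assumption $\Pol{Q}\subsetneq\Op{D}$ forces at least one member $R_{\mathbf{b}}^{\mathbf{a}}\in Q$ (with parameters $\mathbf{a}\in D^p$, $\mathbf{b}\in D^q$) to be properly contained in $D^{p+q}$. Inspection of the defining clause shows that an entry $a_i=0$ (respectively $b_j=n-1$) would make the disjunct $x_i\geq a_i$ (resp.\ $y_j\leq b_j$) universally true, hence collapse $R_{\mathbf{b}}^{\mathbf{a}}$ to the full relation. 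So necessarily all $a_i\in\Dommenosa{0}$ and all $b_j\in\Dommenosa{n-1}$.

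Next, set $a\defeq\min_{1\leq i\leq p}a_i$ and $b\defeq\max_{1\leq j\leq q}b_j$. By the above observation these parameters lie in $\Dommenosa{0}$ and $\Dommenosa{n-1}$, respectively, so $\Ruab$ is a valid binary clausal relation. The key identity is obtained by identifying all $p$ ``left'' entries of $R_{\mathbf{b}}^{\mathbf{a}}$ to a single variable $x$ and all $q$ ``right'' entries to a single variable $y$: one checks that $(x,\dotsc,x,y,\dotsc,y)\in R_{\mathbf{b}}^{\mathbf{a}}$ exactly when $x\geq a_i$ for some $i$ or $y\leq b_j$ for some $j$, which is equivalent to $x\geq a$ or $y\leq b$, i.e.\ to $(x,y)\in\Ruab$.

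From this identity, closure of preservation under variable identification gives the inclusion $\Pol{Q}\subs\Pol{\Ruab}$: for every $k$-ary $f\in\Pol{Q}$ and any pairs $(x_1,y_1),\dotsc,(x_k,y_k)\in\Ruab$, the corresponding ``fattened'' tuples lie in $R_{\mathbf{b}}^{\mathbf{a}}$; applying $f$ componentwise and projecting back produces $(f(x_1,\dotsc,x_k),f(y_1,\dotsc,y_k))\in\Ruab$. Finally, Theorem~\ref{teo_caracterization_cclonemax} confirms that $\Pol{\Ruab}$ is a maximal \cclone, which completes the argument.

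The only delicate point is the opening step, where one must rule out the possibility that every relation in $Q$ is preserved by every operation; this is handled by pushing the hypothesis $\Pol{Q}\subsetneq\Op{D}$ through to the structural consequence $R_{\mathbf{b}}^{\mathbf{a}}\neq D^{p+q}$ for at least one $R_{\mathbf{b}}^{\mathbf{a}}\in Q$. Once that is in hand, the minimisation/maximisation trick and the characterisation of maximal \cclones{} do the rest.
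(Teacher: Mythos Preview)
Your proof is correct and follows essentially the same route as the paper: from $\Pol{Q}\subsetneq\Op{D}$ extract a non-full $R_{\mathbf b}^{\mathbf a}\in Q$, deduce that all $a_i\neq 0$ and $b_j\neq n-1$, set $a=\min_i a_i$, $b=\max_j b_j$, and conclude $\Pol{Q}\subs\Pol{\Ruab}$ with $\Pol{\Ruab}$ maximal by Theorem~\ref{teo_caracterization_cclonemax}. The only difference is that the paper invokes an external lemma (from~\cite{Edith-thesis}) for the inclusion $\Pol{R_{\mathbf b}^{\mathbf a}}\subs\Pol{\Ruab}$, whereas you spell out the variable identification argument yourself; your version is thus slightly more self-contained.
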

\begin{proof}
If every \m{\Rab\in Q} contains a \m{0} among
\m{\set{a_1,\dotsc,a_p}} or \m{n-1\in\set{b_1,\dotsc,b_q}}, then
\m{\Pol{Q}=\Op{D}}, so the premise of the implication is not fulfilled.
This is in particular the case for \m{n\leq 1}, so let us further consider
\m{n\geq 2} and suppose that there exists some \m{\Rab\in Q} where
\m{\mathbf{a}\in\apply{\Dommenosa{0}}^p} and
\m{\mathbf{b}\in\apply{\Dommenosa{n-1}}^q}.
It follows that \m{\Pol{Q}\subs\Pol{\set{\Rab}}}.
By Lemma~6.1.3 of~\cite{Edith-thesis} we have
\m{\Pol{\set{\Rab}}\subs\Pol{\ovflhbx{0.42195pt}\set{\Ruab}}} where
\m{a=\min\set{a_1,\dotsc,a_p}>0} and
\m{b=\max\set{b_1,\dotsc,b_q}<n-1}.
By~Theorem~\ref{teo_caracterization_cclonemax}, \m{\Pol{\Ruab}} is
indeed a maximal \cclone; by the above, it is a superclone
of \m{\Pol{Q}}.
\end{proof}

\begin{corollary}\label{cor:completeness-criterion}
Let \m{Q\subs \CR} be a set of clausal relations on
\m{D=\set{0,\dotsc,n-1}}, \m{n\geq 3}, and put \m{F\defeq \Pol{Q}}. If for
each \m{0\leq b<n-1}  there is some \m{f\in F} such that
\m{f\mathrel{\npres}\theta_{b}}, where \m{\theta_{b}} is the equivalence
relation belonging to the non\dash{}trivial partition
\m{\factorBy{D}{\theta_{b}}=\set{\set{0,\dotsc,b},\set{b+1,\dotsc,n-1}}},
and for each \m{0<a\leq b<n-1} there is some \m{f\in F} such that
\m{f\npres\set{0,\dotsc,b}^{2}\cup\ovflhbx{2pt}\set{a,\dotsc,n-1}^{2}\ovflhbx{4pt}}, and for each
\m{0\ovflhbx{1pt}\leq\ovflhbx{1pt} b
    \ovflhbx{1.00pt}\leq\ovflhbx{1.00pt}
    n\ovflhbx{1.00pt}-\ovflhbx{1.00pt}3} and all \m{2\leq k\leq n-1-b}
we have
\m{f\mathrel{\npres}\set{0,\dotsc,b}\cup\set{b+k,\dotsc,n-1}} for some
\m{f\in F}; then \m{F=\Pol{Q}=\Op{D}}.
\end{corollary}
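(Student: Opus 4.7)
The plan is to argue by contraposition, combining the preceding lemma with the main theorem. Assume, for a contradiction, that $F=\Pol{Q}\subsetneq\Op{D}$. Lemma~\ref{lem:all-non-full-cclones-below-maximal-one} then supplies parameters $a\in\Dommenosa{0}$ and $b\in\Dommenosa{n-1}$ with $F\subs\Pol{\Ruab}$; by Theorem~\ref{thm:char-relationship-max-clones-max-cclones}, the maximal \cclone{} $\Pol{\Ruab}$ is in turn contained in a uniquely determined maximal clone $M$, whose description depends only on the sign of $a-b-1$. It thus suffices to check that in each of the three cases the relation defining $M$ coincides with one of those named in the corresponding bullet of the Corollary, at which point the matching hypothesis yields some $f\in F$ failing to preserve it, contradicting $F\subs M$.

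I split the discussion accordingly. If $a=b+1$, then $M=\Pol{\theta}$ for the equivalence $\theta$ with partition $\set{\set{0,\dotsc,b},\set{b+1,\dotsc,n-1}}$; this is exactly $\theta_b$, and the ambient constraints $a\in\Dommenosa{0}$ and $b\in\Dommenosa{n-1}$ force $0\leq b<n-1$, so the first bullet applies. If $a\leq b$, then $0<a\leq b<n-1$ and $M=\Pol{\sigma_{a,b}}$, matching the second bullet. If $a\geq b+2$, setting $k\defeq a-b$ gives $k\geq 2$, $0\leq b\leq n-3$ and $2\leq k\leq n-1-b$ (the upper bound on $k$ because $b+k=a\leq n-1$), and $M=\Pol{\set{0,\dotsc,b}\cup\set{b+k,\dotsc,n-1}}$, matching the third bullet.

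All the substantive mathematical content has already been accumulated in the previous sections and packaged into Theorem~\ref{thm:char-relationship-max-clones-max-cclones} and Lemma~\ref{lem:all-non-full-cclones-below-maximal-one}; the only remaining task, and therefore the one place where genuine care is needed, is the routine bookkeeping that aligns the three parameter ranges in the Corollary's hypotheses with the three cases produced by the main theorem.
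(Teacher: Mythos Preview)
Your argument is correct and is essentially the contrapositive formulation of the paper's proof: the paper states directly that the hypotheses together with Theorem~\ref{thm:char-relationship-max-clones-max-cclones} yield $F\not\subs\Pol{\Ruab}$ for all admissible $a,b$, and then invokes Lemma~\ref{lem:all-non-full-cclones-below-maximal-one} to conclude $F=\Op{D}$. Your case-by-case verification of the parameter ranges is a helpful elaboration of what the paper leaves implicit, but the overall route is the same.
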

\begin{proof}
By the assumptions and
Theorem~\ref{thm:char-relationship-max-clones-max-cclones}, we have
\m{F\not\subs\Pol{\Ruab}} for all parameters \m{a\in\Dommenosa{0}},
\m{b\in\Dommenosa{n-1}}. Therefore, the \cclone\xspace \m{F} is not
contained in any maximal \cclone.
Using Lemma~\ref{lem:all-non-full-cclones-below-maximal-one},
we can conclude that \m{\Pol{Q}=F} must be the full \cclone\xspace
\m{\Op{D}}.
\end{proof}

%%%%%%%%%%%%%%%%%%%%%%%%%%%%%%%%%%%%%%%%%%%%%%%%%%%%%%%%%%%%%%%%%%%%%%%%%%
%%%%%%%%%%%%%%%%%% Bibliography %%%%%%%%%%%%%%%%%%%%%%%%%%%%%%%%%%%%%%%%%%
%%%%%%%%%%%%%%%%%%%%%%%%%%%%%%%%%%%%%%%%%%%%%%%%%%%%%%%%%%%%%%%%%%%%%%%%%%

\bibliographystyle{amsalpha}
\bibliography{maxCclones3}

\smallskip

\myContact{\CorrespondingAuthor}{%
\TUWname,
\InstitutCL,
\PostleitzahlWien}{behrisch@logic.at}

\myContact{\SecondAuthor}{%
\ULeeds,
\SchoolName,
\EdithAddress}{pmtemv@leeds.ac.uk}
\end{document}